\title{$\mathrm{G}_2$--instantons on generalised Kummer constructions}
\author{Thomas Walpuski}
\email{tw809@ic.ac.uk}
\address{Department of Mathematics\\
Imperial College London\\\newline
180 Queen's Gate\\
London SW7 2AZ\\
UK}
\def\cnewtheorem#1[#2]#3{\newtheorem{#1}{#3}[section]
\expandafter\let\csname c@#1\endcsname\c@thm}
\let\xysavmatrix\xymatrix
\def\xymatrix{\disablesubscriptcorrection\xysavmatrix}
\newcommand{\vstrut}[2]{{\vrule height #1pt depth #2pt width 0pt}} 
\newcommand{\tC}[1]{\widetilde{\vstrut{8}{0}\smash{\C^2/#1}}}
\numberwithin{equation}{section}
\theoremstyle{plain}
\newtheorem{theorem}{Theorem}[section]
\newtheorem{prop}[theorem]{Proposition}
\newtheorem{lemma}[theorem]{Lemma}
\newtheorem{step}{Step}
\newtheorem{case}{Case}
\theoremstyle{remark}
\newtheorem{remark}[theorem]{Remark}
\theoremstyle{definition}
\newtheorem{definition}[theorem]{Definition}
\newtheorem{example}[theorem]{Example}
\newcommand{\N}{\mathbb{N}}
\newcommand{\HH}{\mathbb{H}}
\newcommand{\Aff}{{\rm Aff}}
\newcommand{\GL}{{\rm GL}}
\newcommand{\SO}{{\rm SO}}
\newcommand{\U}{{\rm U}}
\newcommand{\SU}{{\rm SU}}
\newcommand{\Spin}{{\rm Spin}}
\newcommand{\Diff}{\mathrm{Diff}}
\newcommand{\so}{\mathfrak{so}}
\newcommand{\rd}{{\rm d}}
\newcommand{\rG}{{\rm G}}
\newcommand{\sA}{\mathscr A}
\newcommand{\sG}{\mathscr G}
\newcommand{\sM}{\mathscr M}
\newcommand{\sP}{\mathscr P}
\newcommand{\sS}{\mathscr S}
\newcommand{\cH}{\mathcal H}
\newcommand{\cL}{\mathcal L}
\newcommand{\cR}{\mathcal R}
\newcommand{\frg}{\mathfrak g}
\newcommand{\bg}{{\bf g}}
\newcommand{\br}{{\bf r}}
\newcommand{\into}{\hookrightarrow}
\newcommand{\del}{\partial}
\newcommand{\iso}{\cong}
\newcommand{\Aut}{\mathrm{Aut}}
\newcommand{\ch}{{\rm ch}}
\newcommand{\id}{\mathrm{id}}
\newcommand{\tr}{\mathop{\mathrm{tr}}\nolimits}
\newcommand{\vol}{\mathrm{vol}}
\newcommand{\PD}{{\rm PD}} 
\newcommand{\Ric}{{\rm Ric}} 
\newcommand{\loc}{{\rm loc}}
\def\({\mathopen{}\left(}
\def\){\right)\mathclose{}}
\newcommand{\Hol}{{\rm Hol}} 
\newcommand{\gl}{\mathfrak{gl}}
\newcommand{\ua}{{\underline a}}
\newcommand{\ub}{{\underline b}}
\newcommand{\dvol}{{\rm dvol}}
\newcommand{\YM}{{\rm YM}}
\newcommand{\diag}{{\rm diag}}
\newcommand{\qandq}{\quad\text{and}\quad}
\def\<{\langle}
\def\>{\rangle}
\begin{document}

\begin{asciiabstract}
In this article we introduce a method to construct G_2-instantons on
G_2-manifolds arising from Joyce's generalised Kummer construction. The
method is based on gluing ASD instantons over ALE spaces to flat bundles
on G_2-orbifolds of the form T^7/Gamma.  We use this construction to
produce non-trivial examples of G_2-instantons.
\end{asciiabstract}

\begin{webabstract}
In this article we introduce a method to construct
$\mathrm{G}_2$--instantons on $\mathrm{G}_2$--manifolds arising
from Joyce's generalised Kummer construction. The method
is based on gluing ASD instantons over ALE spaces to flat
bundles on $\mathrm{G}_2$--orbifolds of the form $T^7/ \Gamma$.
We use this construction to produce non-trivial examples of
$\mathrm{G}_2$--instantons.
\end{webabstract}

\begin{abstract}
In this article we introduce a method to construct
$\mathrm{G}_2$--instantons on $\mathrm{G}_2$--manifolds arising from Joyce's
generalised Kummer construction~\cite{Joyce1996a,Joyce1996b}.  The method is
based on gluing ASD instantons over ALE spaces to flat bundles on
$\mathrm{G}_2$--orbifolds of the form $T^7\!/\Gamma$.  We use this
construction to produce non-trivial examples of $\mathrm{G}_2$--instantons.
\end{abstract}

\maketitle

\section{Introduction}
\label{sec:intro}

The seminal paper~\cite{Donaldson1998} of Donaldson--Thomas has
inspired a considerable amount of work related to gauge theory in
higher dimensions.  Tian~\cite{Tian2000} and Tao--Tian~\cite{Tao2004}
made significant progress on important foundational analytical
questions.  Recent work of Donaldson--Segal~\cite{Donaldson2009} and
Haydys~\cite{Haydys2011} shed some light on the shape of the theories
to be expected.

In this article we will focus on the study of gauge theory on
$\rG_2$--manifolds.  These are $7$--manifolds equipped with a
torsion-free $\rG_2$--structure.  The $\rG_2$--structure allows us to
define a special class of connections, called $\rG_2$--instantons (see
\fullref{def:i}).  These share many formal properties with flat
connections on $3$--manifolds and it is expected that there are
$\rG_2$--analogues of those $3$--manifold invariants that are related
to ``counting flat connections'', that is, the Casson invariant,
instanton Floer homology, etc.

So far non-trivial examples of $\rG_2$--instantons are rather rare.
By exploiting the special geometry of the known $\rG_2$--manifolds
some progress has been made recently.  At the time of writing, there
are essentially two methods for constructing compact
$\rG_2$--manifolds in the literature.  Both yield $\rG_2$--manifolds
close to degenerate limits.  One is Kovalev's twisted connected sum
construction~\cite{Kovalev2003}, which produces $\rG_2$--manifolds
with ``long necks'' from certain pairs of Calabi--Yau 3--folds with
asymptotically cylindrical ends.  A technique for constructing
$\rG_2$--instantons on Kovalev's $\rG_2$--manifolds has recently been
proposed by S\'a~Earp~\cite{SaEarp2011,SaEarp2011a}.  The other (and
historically the first) method for constructing $\rG_2$--manifolds is
due to Joyce~\cite{Joyce1996a,Joyce1996b} and is based on desingularising $\rG_2$--orbifolds.  In this article we introduce a method to construct
$\rG_2$--instantons on $\rG_2$--manifolds arising from Joyce's
construction.

To set up the framework for our construction, let us briefly review
the geometry of Joyce's construction: Equip $T^7$ with a flat
$\rG_2$--structure $\phi_0$ and let $\Gamma$ be a finite group of
diffeomorphisms of $T^7$ preserving $\phi_0$.  Then
$Y_0:={T^7}/\Gamma$ is a flat $\rG_2$--orbifold.  The singular set $S$
of $Y_0$ can, in general, be quite complicated.  In this article we
restrict to \emph{admissible} $\rG_2$--orbifolds $Y_0$.  That is, we
assume that each of the connected components $S_j$ of $S$ has a
neighbourhood modelled on $(T^3\times \C^2/G_j)/H_j$.  Here $G_j$ is
a non-trivial finite subgroup of $\SU(2)$ and $H_j$ is a finite group
acting by isometries on $T^3$ as well as on $\C^2/G_j$; moreover, the
action of $H_j$ on $T^3=\R^3/\Z^3$ is induced by a free affine action
on $\R^3$ normalising the action of $\Z^3$.  Suppose we are given
\emph{resolution data} $\br=\{(X_j,\rho_j)\}$ for $Y_0$, that is, for
each $j$, an ALE space $X_j$ asymptotic to $\C^2/G_j$ together with an
isometric action $\rho_j$ of $H_j$ on $X_j$ which is asymptotic to the
action of $H_j$ on $\C^2/G_j$.  Then using Joyce's generalised Kummer
construction~\cite{Joyce1996a,Joyce1996b} we can resolve the singularities in
$Y_0$ and produce a compact $7$--manifold $Y$ together with a family
of torsion-free $\rG_2$--structures $\(\phi_t\)_{t\in(0,T)}$.

In this article we will construct $\rG_2$--instantons over
$(Y,\phi_t)$ given \emph{gluing data} $\bg$ compatible with the
resolution data $\br$ for $Y_0$.  The notion of gluing data will be
defined carefully in \fullref{sec:approx}.  For now, it suffices
to say that $\bg$ consists of
\begin{itemize}
\item a $G$--bundle $E_0$ over $Y_0$ together with a flat connection
  $\theta$ and
\item for each $j$, a $G$--bundle $E_j$ over $X_j$ together with a
  framed ASD instanton $A_j$
\end{itemize}
as well as various auxiliary data satisfying a number of compatibility
conditions.  Here we take $G$ to be a compact connected semi-simple
Lie group, for example, $G=\SO(3)$.

\begin{theorem}\label{thm:a}
  Let $Y_0$ be an admissible flat $\rG_2$--orbifold, let $\br$ be
  resolution data for $Y_0$ and let $\bg$ be compatible gluing data.
  Suppose that the flat connection $\theta$ is acyclic and that the
  ASD instantons $A_j$ are infinitesimally rigid.  Then there is a
  constant $T'\in(0,T]$ and a $G$--bundle $E$ over $Y$ as well as for
  each $t\in(0,T')$ a connection $A_t$ on $E$ that is an acyclic
  $\rG_2$--instanton over $(Y,\phi_t)$.  Moreover, the adjoint bundle
  $\frg_E$ associated with $E$ satisfies
  \begin{align}\label{eq:p1}
    p_1(\frg_{E})&=-\sum_j k_j\,\PD[S_j]\quad\text{with }
    k_j:=\frac{1}{8\pi^2}\int_{X_j} |F_{A_j}|^2, \\
  \label{eq:w2}
\text{and}\quad
    \<w_2(\frg_{E}),\Sigma\>&=\<w_2(\frg_{E_j}),\Sigma\>
  \end{align}
  for each $\Sigma\in H_2(X_j)^{H_j}\subset H_2(Y)$.  Here $[S_j]\in
  H_3(Y,\Q)$ is the rational homology class arising from $S_j$ and
  $H_2(X_j)^{H_j}$ denotes the $H_j$--invariant part of $H_2(X_j)$;
  see \fullref{rmk:topy}.
\end{theorem}

\begin{remark}
  We will specify in \fullref{def:acyclic} and
  \fullref{def:rigid}, respectively, what it means for a
  $\rG_2$--instanton, and thus for a flat connection, being a
  particular instance of a $\rG_2$--instanton, to be acyclic and
  for an ASD instanton to be infinitesimally rigid.
\end{remark}

\begin{remark}
  We equip the adjoint bundles $\frg_{E_j}$ and $\frg_E$ with the
  inner product arising from the negative of the Killing form on the
  Lie algebra $\frg$ associated with $G$.
\end{remark}

It is not unreasonable to expect that under certain topological
assumptions all $\rG_2$--instantons on $\rG_2$--manifolds arsing from
Joyce's generalised Kummer construction close to the degenerate limit
come from a suitable generalisation of our construction.
Optimistically, one could hope that this will some day make the (so
far conjectural) $\rG_2$ Casson invariant accessible to computation.

The proof of \fullref{thm:a} is based on a gluing construction.
The analysis involved is similar to work on $\Spin(7)$--instantons in
Lewis' DPhil thesis~\cite{Lewis1998}, unpublished work of Brendle on the
Yang--Mills equation in higher dimension~\cite{Brendle2003} and
Pacard--Ritor\'e's work on the Allen--Cahn equation~\cite{Pacard2003}.
From a geometric perspective our result can be viewed as a
higher-dimensional analogue of Kronheimer's work on ASD instantons on
Kummer surfaces~\cite{Kronheimer1991}.

Here is an outline of the article.  Sections~\ref{sec:review},
\ref{sec:g2i}, \ref{sec:kummer} and \ref{sec:asdale} contain some
foundational material on $\rG_2$--manifolds and $\rG_2$--instantons as
well as brief reviews of Joyce's generalised Kummer construction and
Kronheimer and Nakajima's work on ASD instantons on ALE spaces.  The
proof of \fullref{thm:a} begins in earnest in
\fullref{sec:approx}, where we construct approximate
$\rG_2$--instantons from gluing data and introduce weighted H\"older
spaces adapted to the problem at hand.  In \fullref{sec:model} we
set up the analytical problem underlying the proof of
\fullref{thm:a} and discuss a model for the linearised problem.
We complete the proof of \fullref{thm:a} in
\fullref{sec:deform}.  A number of concrete examples of
$\rG_2$--instantons with $G=\SO(3)$ are constructed in
\fullref{sec:ex}.

\subsubsection*{Acknowledgements}
This article is the outcome of work undertaken by the author for his
PhD thesis at Imperial College London, supported by European Research
Council Grant 247331.  I am grateful to my supervisor Simon Donaldson
his encouragement and support.  Moreover, I would like to thank the
anonymous referee for helpful comments on an earlier version of this
article.

\section[Review of G_2--manifolds]{Review of $\rG_2$--manifolds}
\label{sec:review}

In this section we recall some basic definitions and results in
$\rG_2$--geometry.  For a more comprehensive treatment we refer the
reader to Joyce's book~\cite{Joyce2000}, specifically Chapter~10.

The Lie group $\rG_2$ can be defined as the subgroup of elements of
$\GL(7)$ fixing the $3$--form
\begin{equation}\label{eq:phi0}
  \phi_0:=\rd x^{123}+\rd x^{145}+\rd x^{167}+\rd x^{246}-\rd
  x^{257}-\rd x^{347}-\rd x^{356}.
\end{equation}
Here $\rd x^{ijk}$ is a shorthand for $\rd x^i\wedge \rd x^j \wedge
\rd x^k$ and $x_1,\ldots,x_7$ are standard coordinates on $\R^7$.  The
particular choice of $\phi_0$ is not important.  Any non-degenerate
$3$--form $\phi$ on $\R^7$ is equivalent to $\phi_0$ under a change of
coordinates; see, for example, Salamon--Walpuski~\cite[Theorem~3.2]{Salamon2010}.  Here we say
that $\phi$ is non-degenerate if for each non-zero vector $u\in\R^7$
the $2$--form $i(u)\phi$ on $\R^7/\<u\>$ is symplectic.  It follows
from the identity%
\begin{equation}\label{eq:gphi}
  i(u)\phi_0\wedge i(v)\phi_0\wedge\phi_0=6g_{\R^7}(u,v) \vol_{\R^7}
\end{equation}
that any element of $\GL(7)$ which preserves $\phi_0$ also preserves
the standard inner product $g_{\R^7}$ and the standard volume form
$\vol_{\R^7}$ on $\R^7$.  Therefore, $\rG_2$ is a subgroup of
$\SO(7)$. In particular, every non-degenerate $3$--form $\phi$ on a
$7$--dimensional vector space induces an inner product and an
orientation on this vector space.  As an aside, we should point out
here that non-degenerate $3$--forms constitute one of two open orbits
of $\GL(7)$ in $\Lambda^3 (\R^7)^*$.  For $\phi$ in the other open
orbit, the analogue of equation~\eqref{eq:gphi} yields an indefinite
metric of signature $(3,4)$.  In particular, if we take $u=v$ to be a
light-like vector, then $i(u)\phi$ is not a symplectic form on
$\R^7/\<u\>$.

From the above discussion it is clear that a non-degenerate $3$--form
$\phi$ on $Y$ is equivalent to a reduction of the structure group of
$TY$ from $\GL(7)$ to $\rG_2$, that is, a $\rG_2$--structure.  Moreover,
$\phi$ induces a Riemannian metric $g_\phi$ and an orientation on $Y$.
The intrinsic torsion of the $\rG_2$--structure corresponding to
$\phi$ can be identified with $\nabla_{g_\phi} \phi$.

\begin{definition}
  A \emph{$\rG_2$--manifold} is a $7$--manifold $Y$ equipped with a
  torsion-free $\rG_2$--structure $\phi$, that is,
  \begin{equation*}
    \nabla_{g_\phi} \phi=0.
  \end{equation*}
\end{definition}

\begin{remark}
  Analogously, one can define the general notion of a
  \emph{$\rG_2$--orbifold}.  (For a thorough discussion of orbifolds
  we recommend the book of Adem--Leida--Ruan~\cite{Adem2007}.)  In
  this article, however, we will only encounter very simple
  $\rG_2$--orbifolds of the form $(Y/\Gamma,\phi)$ where $(Y,\phi)$ is
  a $\rG_2$--manifold and $\Gamma$ is a finite group of diffeomorphism
  of $Y$ preserving $\phi$.
\end{remark}

There is a plethora of reasons to be interested in $\rG_2$--manifolds.
$\rG_2$--manifold have holonomy group $\Hol(g_\phi)\subset\rG_2$ which
appears as one of the exceptional cases in Berger's classification of
holonomy groups of irreducible non-symmetric Riemannian
manifolds~\cite[Theorem~3]{Berger1955}.  $\rG_2$--manifolds are spin manifolds
and carry (at least) one non-zero parallel spinor (see
Joyce~\cite[Proposition~10.1.6]{Joyce2000}) and, hence, are Ricci-flat and of
relevance to theoretical physics.  Moreover, $\rG_2$--manifolds carry
a pair of calibrations in the sense of Harvey--Lawson~\cite{Harvey1982}: the \emph{associative calibration} $\phi$ and the
\emph{coassociative calibration} $\psi:=*\phi$.  This makes their
submanifold geometry very rich and interesting.  Furthermore, it is
very appealing to study gauge theory on $\rG_2$--manifolds as we will
see in \fullref{sec:g2i}.

\begin{example}\label{ex:t7}
  The $7$--torus $T^7=\R^7/\Z^7$ equipped with the $\rG_2$--structure
  $\phi_0$ defined in \eqref{eq:phi0} is a $\rG_2$--manifold.
\end{example}

\begin{definition}
  A \emph{hyperk\"ahler manifold} is a Riemannian manifold $(X,g)$
  together with a triple $(I_1,I_2,I_3)$ of parallel orthogonal
  complex structures satisfying $I_1I_2=-I_2I_1=I_3$.
\end{definition}

\begin{remark}
  If $(X,g,I_1,I_2,I_3)$ is a hyperk\"ahler manifold, then the metric
  $g$ is K\"ahler with respect to each of complex structures
  $a_1I_1+a_2I_2+a_3I_3$ with $(a_1,a_2,a_3)\in S^2\subset \R^3$.
\end{remark}

\begin{example}\label{ex:t3x}
  Let $(X,g,I_1,I_2,I_3)$ be a hyperk\"ahler $4$--manifold.  For
  $i=1,2,3$ denote by $\omega_i:=g(I_i\,\cdot\,,\,\cdot\,)$ the K\"ahler form
  associated with the complex structure $I_i$.  Choose an orthonormal
  triple $(\delta^1,\delta^2,\delta^3)$ of constant $1$--forms on
  $T^3$.  Then $T^3\times X$ is a $\rG_2$--manifold with torsion-free
  $\rG_2$--structure $\phi$ defined by
  \begin{equation*}
    \phi:=\delta^1\wedge\delta^2\wedge\delta^3
          +\delta^1\wedge\omega_1
          +\delta^2\wedge\omega_2
          -\delta^3\wedge\omega_3.
  \end{equation*}
  The metric and the orientation on $T^3\times X$ induced by $\phi$
  coincide with the product metric and the product orientation.  To
  see that, note that each cotangent space to $X$ has a positive
  orthonormal basis $(e^0,\ldots,e^3)$ with $e^i=I_ie^0$, for
  $i=1,2,3$, such that%
  \begin{equation}\label{eq:omega}
\begin{aligned}
    \omega_1&=e^0\wedge e^1+e^2\wedge e^3, \\
    \omega_2&=e^0\wedge e^2-e^1\wedge e^3, \\
    \omega_3&=e^0\wedge e^3+e^1\wedge e^2.
\end{aligned}
  \end{equation}
  This immediately yields a orientation-preserving isometry
  $T_x(T^3\times X) \to \R^7$ identifying $\phi$ with $\phi_0$.
  Note that in the current example the coassociative calibration
  $\psi:=*\phi$ is given by
  \begin{equation}\label{eq:psi}
    \psi=\tfrac12 \omega_1\wedge\omega_1
    + \delta^2\wedge\delta^3\wedge\omega_1
    + \delta^3\wedge\delta^1\wedge\omega_2
    - \delta^1\wedge\delta^2\wedge\omega_3.
  \end{equation}
\end{example}

\begin{remark}\label{rmk:holpi1}
  The above examples have holonomy strictly contained in $\rG_2$.
  This is clear from their construction, but can also be seen as a
  consequence of their topology since a compact $\rG_2$--manifold
  $(Y,\phi)$ satisfies $\Hol(g_\phi)=\rG_2$ if and only if $\pi_1(Y)$
  is finite; see Joyce~\cite[Proposition~10.2.2]{Joyce2000}.
\end{remark}

The following observation is central for the construction of
$\rG_2$--manifolds.

\begin{theorem}[Fern\'andez--Gray~{\cite[Theorem~4.9]{Fernandez1982}}]\label{thm:fg}
  Let $Y$ be a $7$--manifold.  Denote by
  $\sP\subset\Omega^3(Y)$ the subspace of all
  non-degenerate $3$--forms on $Y$ and define
  $\Theta\co \sP \to \Omega^4(Y)$ by
  \begin{equation}\label{eq:Theta}
    \Theta(\phi):=*_\phi \phi.
  \end{equation}
  Here $*_\phi$ is the Hodge $*$--operator associated with $\phi$.
  Then a $\rG_2$--structure $\phi$ is torsion-free if and only if
  \begin{equation*}
    \rd\phi=0 \quad\text{and}\quad \rd\Theta(\phi)=0.
  \end{equation*}
\end{theorem}

The key difficulty in constructing $\rG_2$--manifolds comes from the
fact that $\Theta$ is non-linear.  It is currently unknown which
compact $7$--manifolds do admit torsion-free $\rG_2$--structures.  All
known non-trivial compact examples arise by way of gluing
constructions.  One of those constructions will be described in more
detail in \fullref{sec:kummer}.

Before we move on, let us recall a few facts, going back at least to
the work of Fern\'andez--Gray~\cite{Fernandez1982}, that will be useful
in the following.  We refer the interested reader to
Salamon--Walpuski~\cite[Theorem~8.4]{Salamon2010} for a detailed proof.

\begin{prop}\label{prop:split}
  There is a $\rG_2$--invariant orthogonal splitting
  \begin{equation*}
    \Lambda^2 (\R^7)^*=\Lambda^2_7\oplus\Lambda^2_{14},
  \end{equation*}
  where
  \begin{equation*}
    \Lambda^2_7
      := \left\{ \omega : *(\omega\wedge\phi_0)=2\omega\right\}
    \quad\text{and}\quad
    \Lambda^2_{14}
      := \left\{ \omega : *(\omega\wedge\phi_0)=-\omega\right\}.
  \end{equation*}
  Moreover, $\Lambda^2_{14}$ is the kernel of the map $\omega\mapsto
  \omega\wedge\psi_0$, where $\psi_0:=*\phi_0$, and can be identified
  with $\frg_2\subset \so(7)\iso\Lambda^2(\R^7)^*$.
\end{prop}

\section[Gauge theory on G_2--manifolds]
{Gauge theory on $\rG\sb2$--manifolds}
\label{sec:g2i}

Let $(Y,\phi)$ be a \emph{compact} $\rG_2$--manifold (or, more
generally, a compact $\rG_2$--orbifold), let $\psi:=\Theta(\phi)$ and
let $E$ be a $G$--bundle over $Y$.  Denote by $\sA(E)$ the space of
connections on $E$.

\begin{definition}\label{def:i}
  A connection $A\in\sA(E)$ on $E$ is called a
  \emph{$\rG_2$--instanton} if it satisfies
  \begin{equation}\label{eq:i1}
    *(F_A\wedge\phi)=-F_A.
    \vadjust{\penalty-2000}
  \end{equation}
\end{definition}

These equations have first appeared in the physics literature
(see Corrigan--Devchand--Fairlie--Nuyts~\cite{Corrigan1983}) and were later brought to a wider attention by
Donaldson--Thomas~\cite[Section~3]{Donaldson1998}.
\fullref{eq:i1} can be thought of as a $7$--dimensional version
of the anti-self-duality condition familiar from dimension four.  As
we will discuss shortly, $\rG_2$--instantons also have a striking
similarity with flat connections over $3$--manifolds.%

\begin{example}\label{ex:flat}
  Flat connections are $\rG_2$--instantons.
\end{example}

\begin{example}\label{ex:asdg2}
  Let $X$ be a hyperk\"ahler manifold, let $E$ be a $G$--bundle over
  $X$ and let $A$ be an ASD instanton on $E$, that is, a connection on
  $E$ whose curvature $F_A$ is anti-self-dual.  Then the pullback of
  $A$ to the $\rG_2$--manifold $T^3\times X$ from \fullref{ex:t3x}
  is a $\rG_2$--instanton:
  \begin{equation*}
    *(F_A\wedge\phi)
    =*\big(F_A\wedge\delta^1\wedge\delta^2\wedge\delta^3\big)
    =*_X F_A = -F_A.
  \end{equation*}
  Here we used that $F_A\wedge\omega_i=0$ and $*_X$ denotes the
  Hodge $*$--operator on $X$.
\end{example}

\begin{example}
  The Levi-Civita connection on a $\rG_2$--manifold $(Y,\phi)$ is a
  $\rG_2$--instanton.  To see that, observe that at each point we can
  think of the Riemannian curvature tensor $R$ as an element of
  $S^2\frg_2\subset \Lambda^2\otimes\gl(7)$, since
  $\Hol(g_\phi)\subset\rG_2$.  But then it follows from
  \fullref{prop:split} that $*(R\wedge\phi)=-R$.
\end{example}

Since $\phi$ is closed, it follows from the Bianchi identity that
$\rG_2$--instantons are Yang--Mills connections, that is,
$\rd_A^*F_A=0$.  In fact, they are absolute minima of the Yang--Mills
functional $\mathrm{YM}\co \sA(E) \to \R$, since
\begin{equation}\label{eq:ei}
  \YM(A)
  :=\int_Y |F_A|^2 \dvol
   = \tfrac13 \int_Y |F_A+*(F_A\wedge\phi)|^2 \dvol
   - \int_Y \<F_A\wedge F_A\>\wedge\phi
\end{equation}
and, by Chern--Weil theory, the second term is a topological constant
depending only on $E$.  The energy identity \eqref{eq:ei} follows from
a straight-forward computation using \fullref{prop:split}.

\begin{prop}\label{prop:ied}
  Let $A\in\sA(E)$ be a connection on $E$. The following are equivalent.
  \begin{enumerate}
  \item\label{ied:1} $A$ is $\rG_2$--instanton.
  \item\label{ied:2} $A$ satisfies $F_A\wedge\psi=0$.
  \item\label{ied:3} There is a $\xi\in\Omega^0(Y,\frg_E)$ such that
    \begin{equation}\label{eq:i2}
      *(F_A\wedge\psi)+\rd_A\xi=0.
      \vadjust{\goodbreak}
    \end{equation}
  \end{enumerate}
\end{prop}

\begin{proof}
  The equivalence of \eqref{ied:1} and \eqref{ied:2} follows
  immediately from \fullref{prop:split}.  Obviously,
  \eqref{ied:2} implies \eqref{ied:3}.  By the Bianchi identity and
  since $\rd\psi=0$ it follows from \eqref{ied:3} that $\rd_A^*\rd_A\xi=0$.
  Hence, by integration by parts,
  \begin{equation*}
    \int_Y |\rd_A\xi|^2=\int_Y \<\rd_A^*\rd_A\xi,\xi\>=0.
  \end{equation*}
  Therefore $\rd_A\xi=0$ and \eqref{ied:3} implies \eqref{ied:2}.
\end{proof}

From \fullref{prop:ied} it becomes apparent that
$\rG_2$--instantons are rather similar to flat connections on
$3$--manifolds.  In particular, if $A_0$ is a $\rG_2$--instanton on
$E$, then there is a $\rG_2$ Chern--Simons functional
$CS^\psi\co\sA(E)\to\R$ defined by
\begin{equation*}
  CS^\psi(A_0+a):=\int_Y \bigl\langle a\wedge\rd_{A_0} a + \tfrac13
  a\wedge[a\wedge a]@\bigr\rangle\wedge\psi
\end{equation*}
whose critical points are precisely the $\rG_2$--instantons on $E$.
It is not entirely unreasonable to expect that some of the
$3$--manifold invariants arising from the Chern--Simons functional,
like the Casson invariant and instanton Floer homology, have
$\rG_2$--analogues.  This idea goes back at least to the seminal paper
of Donaldson--Thomas~\cite{Donaldson1998} and is one of the main
motivations for studying $\rG_2$--instantons.  
Since \fullref{eq:i1} is
invariant under the action of the group $\sG$ of gauge transformations
of $E$, we can consider the \emph{moduli space} of $\rG_2$--instantons
on $E$ over $(Y,\phi)$:
\begin{equation*}
  \sM(E,\phi):=\left\{ A\in\sA(E) : F_A\wedge\psi=0 \right\}/\sG.
\end{equation*}
Very roughly speaking, the conjectural \emph{$\rG_2$ Casson invariant}
should be obtained by ``counting'' $\sM(E,\phi)$.  Whether there is a
rigorous construction of such a $\rG_2$ Casson invariant and whether
it can, in fact, be arranged to be invariant under isotopies of the
$\rG_2$--structure is an open question.  A brief discussion of parts
of this circle of ideas can be found in Donaldson--Segal~\cite[Section~6]{Donaldson2009}.

It is customary in gauge theory to work with local slices of the gauge
group action.  A particularly useful slicing condition is to require
that $B\in\sA(E)$ be in \emph{Coulomb gauge} with respect to a fixed
reference connection $A\in\sA(E)$, that is, $\rd_A^*(B-A)=0$.  (The
importance of the Coulomb gauge stems from the foundational work of
Uhlenbeck~\cite{Uhlenbeck1982a}.  For a careful discussion of how the
Coulomb gauge is used in the construction moduli spaces we refer the
reader to Donaldson--Kronheimer~\cite[Section~4.2]{Donaldson1990}.)  For a fixed connection
$A\in\sA(E)$ we consider the system of equations 
\begin{equation}\label{eq:i2g}
  *(F_{A+a}\wedge\psi)+\rd_{A+a}\xi=0 \quad\text{and}\quad
  \rd_A^*a=0
  \vadjust{\goodbreak}
\end{equation}
for $\xi\in\Omega^0(Y,\frg_E)$ and $a\in\Omega^1(Y,\frg_E)$.  This is
simply \eqref{eq:i2} for $A+a$ instead of $A$ together with the
condition that $A+a$ be in Coulomb gauge with respect to $A$.  The
linearisation $L_A\co\Omega^0(Y,\frg_E)\oplus\Omega^1(Y,\frg_E) \to
\Omega^0(Y,\frg_E)\oplus\Omega^1(Y,\frg_E)$ of \eqref{eq:i2g} is given
by
\begin{equation}\label{eq:la}
  L_A:=\begin{pmatrix}
    0 & \rd_A^* \\
    \rd_A & *\left(\psi\wedge\rd_A\right)
  \end{pmatrix}.
\end{equation}
This is a self-adjoint elliptic operator.  If $A\in\sA(E)$ is a
$\rG_2$--instanton, then $L_A$ controls the infinitesimal deformation
theory of $A$ as a $\rG_2$--instanton.

\begin{definition}\label{def:acyclic}
  A $\rG_2$--instanton $A$ is called \emph{acyclic} if the
  operator $L_A$ is invertible.
\end{definition}

One can show that if every $\rG_2$--instanton $A$ on $E$ is
acyclic, then $\sM(E,\phi)$ is, in fact, a smooth
zero-dimensional manifold, that is, a discrete set.

\section{Joyce's generalised Kummer construction}
\label{sec:kummer}

Equip $T^7$ with a flat $\rG_2$--structure $\phi_0$, as in
\fullref{ex:t7}, and let $\Gamma$ be a finite group of
diffeomorphisms of $T^7$ preserving $\phi_0$.  Then $Y_0:=T^7/\Gamma$
is a flat $\rG_2$--orbifold.  Denote by $S$ the singular set of $Y_0$
and denote by $S_1, \ldots, S_k$ its connected components.

\begin{definition}
  $Y_0$ is called \emph{admissible} if each $S_j$ has a neighbourhood
  isometric to a neighbourhood of the singular set of 
$(T^3\times
  \C^2/G_j)/H_j$.  Here $G_j$ is a non-trivial finite subgroup of
  $\SU(2)$ and $H_j$ is a finite group acting by isometries on $T^3$
  as well as on $\C^2/G_j$; moreover, the action of $H_j$ on
  $T^3=\R^3/\Z^3$ is induced by a free affine action on $\R^3$
  normalising the action of $\Z^3$.
\end{definition}

Let $Y_0$ be an admissible flat $\rG_2$--orbifold.  Then there is a
constant $\zeta>0$ such that if we denote by $T$ the set of points at
distance less that $\zeta$ to $S$, then $T$ decomposes into
connected components $T_1, \ldots, T_k$ such that $T_j$ contains $S_j$
and is isometric to $(T^3\times B^4_\zeta/G_j)/H_j$.  On $T_j$ we can
write
\begin{equation*}
  \phi_0=\delta^1\wedge\delta^2\wedge\delta^3
        +\delta^1\wedge\omega_1
        +\delta^2\wedge\omega_2
        -\delta^3\wedge\omega_3,
\end{equation*}
where $(\delta^1,\delta^2,\delta^3)$ is an orthonormal triple of
constant $1$--forms on $T^3$ and where $(\omega_1,\omega_2,\omega_3)$ is the
triple of K\"ahler forms associated with the standard hyperk\"ahler
structure $(g,I_1,I_2,I_3)$ on $\C^2\iso\HH$.
\vadjust{\goodbreak}

\begin{definition}\label{def:ale}
  Let $G$ be a finite subgroup of $\SU(2)$.  Then an \emph{ALE space
    asymptotic to $\C^2/G$} is a hyperk\"ahler $4$--manifold $(X,\hat
  g,\hat I_1,\hat I_2,\hat I_3)$ together with a continuous map
  $\pi\co X\to\C^2/G$ inducing a diffeomorphism from
  $X\setminus\pi^{-1}(0)$ to $(\C^2\setminus\{0\})/G$ such that
  \begin{equation}\label{eq:aledecay}
    \nabla^k(\pi_*\hat g-g)=O\big(r^{-4-k}\big) \quad\text{and}\quad
    \nabla^k(\pi_*\hat I_i-I_i)=O\big(r^{-4-k}\big)
  \end{equation}
  as $r\to\infty$ for $i=1,2,3$ and $k\geq 0$.  Here $r\co
  \C^2/G_j\to[0,\infty)$ denotes the radius function.
\end{definition}

We will remove the singularity in $Y_0$ along $S_j$ by, roughly speaking,
replacing each $\C^2/G_j$ with an ALE space asymptotic to $\C^2/G_j$.
Due to work of Kronheimer~\cite{Kronheimer1989,Kronheimer1989a}, ALE
spaces are very well understood.

\begin{theorem}[Kronheimer~{\cite[Theorems~1.1,~1.2
    and~1.3]{Kronheimer1989a}}]\label{thm:ale}
  Let $G$ be a non-trivial finite subgroup of $\SU(2)$.  Denote by $X$
  the real $4$--manifold underlying the crepant resolution
  $\tC{G}$.  Then for each three cohomology classes
  $\alpha_1,\alpha_2,\alpha_3\in H^2(X,\R)$ satisfying
  \begin{equation}\label{eq:ndale}
    (\alpha_1(\Sigma),\alpha_2(\Sigma),\alpha_3(\Sigma))\neq 0 \in \R^3
  \end{equation}
  for each $\Sigma\in H_2(X,\Z)$ with $\Sigma\cdot\Sigma=-2$ there is
  a unique ALE hyperk\"ahler structure on $X$ for which the cohomology
  classes of the K\"ahler forms $[\omega_i]$ are given by $\alpha_i$.
  Moreover, each ALE space asymptotic to $\C^2/G$ is diffeomorphic to
  $\tC{G}$ and its associated triple of K\"ahler classes
  satisfies \eqref{eq:ndale}.
\end{theorem}

\begin{remark}\label{rmk:mckay}
  The crepant resolution $\tC{G}$ can be obtained from
  $\C^2/G$ by a sequence of blow-ups.  The exceptional divisor $E$ of
  $X=\smash{\tC{G}}$ has irreducible components
  $\Sigma_1,\ldots,\Sigma_k$.  By the McKay correspondence~\cite{McKay1980}, these components form a basis of $H_2(X,\Z)$ and
  the matrix with coefficients $C_{ij}=-[\Sigma_i]\cdot[\Sigma_j]$ is
  the Cartan matrix associated with the Dynkin diagram corresponding
  to $G$ in the ADE classification of finite subgroups of $\SU(2)$.
\end{remark}

\begin{definition}
  A collection $\br=\{(X_j,\rho_j)\}$ consisting of, for each $j$, an
  ALE space $X_j$ asymptotic to $\C^2/G_j$ together with an isometric
  action $\rho_j$ of $H_j$ on $X_j$ which is asymptotic to the action
  of $H_j$ on $\C^2/G_j$ is called \emph{resolution data for $Y_0$}.
\end{definition}

Suppose we are given resolution data $\br=\{(X_j,\rho_j)\}$.  Denote
by $\pi_j\co X_j\to\C^2/G_j$ the resolution map for $X_j$. For $t>0$
define
\begin{equation}\label{eq:pijt}
  \pi_{j,t}:=t\pi_j:X_j\to\C^2/G_j
\end{equation}
and set
\begin{equation}\label{eq:Tjt}
  \tilde T_{j,t}:=\big(T^3\times
\pi_{j,t}^{-1}\big(B^4_{\zeta}/G_j\big)\big)/H_j
  \quad\text{and}\quad
  \tilde T_t:=\bigcup_j \tilde T_{j,t}.
\end{equation}
Using $\pi_{j,t}$ we can replace each $T_j$ in $Y_0$ by $\tilde
T_{j,t}$ and thus obtain a compact $7$--mani\-fold~$Y_t$.%

\begin{remark}
  The diffeomorphism type of $Y_t$ is independent of $t>0$.  Hence, we
  will sometimes drop the label $t$ and pretend to be working with a
  fixed $7$--manifold $Y$.  However, at various points it will be
  important to remember the precise way in which $Y_t$ was
  constructed.
\end{remark}

\begin{remark}\label{rmk:topy}
  The (co)homology groups and the fundamental group of $Y$
  can relatively easily be computed from the above construction, the
  latter being especially important in view of \fullref{rmk:holpi1}.
  In particular, it can be seen that every $\Sigma\in H_2(X_j,\Z)$
  invariant under the action of $H_j$ yields a cohomology class
  $\Sigma\in H_2(Y,\Z)$.  Also each component of singular set $S_j$
  gives rise to a \emph{rational} homology class
  \begin{equation}\label{eq:Sj}
    [S_j]:=\frac{1}{|H_j|} (\iota_{j,t})_*\big(T^3\times\{x\}\big) \in H_3(Y,\Q),
  \end{equation}
  where $\iota_{j,t}\co T^3\times
  \pi_{\smash{j,t}}^{-1}(B_{\smash{\zeta}}^4/G_j) \to Y$ denotes the projection
  to $\tilde T_{j,t}$ followed by the inclusion into $Y$ and $x$
  denotes a point in $\pi_{j,t}^{-1}(B_\zeta^4/G_j)$.
\end{remark}

On $\tilde T_{j,t}$ there is a torsion-free $\rG_2$--structure given by
\begin{equation*}
  \hat\phi_{j,t}:=\delta^1\wedge\delta^2\wedge\delta^3
  +t^2\delta^1\wedge\hat\omega_{j,1}
  +t^2\delta^2\wedge\hat\omega_{j,2}
  -t^2\delta^3\wedge\hat\omega_{j,3}.
\end{equation*}
Near the boundary of $\tilde T_{j,t}$ the $3$--forms $\hat\phi_{j,t}$
and $\phi_0$ are close to each other.  In order to patch them together
note that there are $1$--forms $\varrho_{j,t,i}$ on
$(\C^2\setminus\{0\})/G_j$ such that
\begin{equation*}
  t^2(\pi_{j,t})_*\hat\omega_{j,i}=\omega_i+\rd\varrho_{j,t,i}
\end{equation*}
with $\nabla^k \varrho_{j,t,i} = t^4O(r^{-3-k})$ for $k\geq 0$; see
Joyce~\cite[Theorem~8.2.3]{Joyce2000}.  Now, fix a smooth non-decreasing
function $\chi\co[0,\zeta]\to[0,1]$ such that $\chi(s)=0$ for
$s\leq\zeta/4$ and $\chi(s)=1$ for $s\geq\zeta/2$ and set
\begin{equation*}
  \tilde\omega_{j,t,i}:= t^2\hat\omega_{j,i}
 - \rd(\chi(|\pi_{j,t}|)\cdot \pi_{j,t}^*\varrho_{j,t,i}).
\end{equation*}
Then $(\pi_{j,t})_*\tilde\omega_{j,t,i}$ and $\omega_i$ agree on
$r^{-1}[\zeta/2,\infty)$ and we can define a $3$--form
$\tilde\phi_t\in\Omega^3(Y_t)$ by $\tilde\phi_t:=\phi_0$ on
$Y_0\setminus T_t=Y_t\setminus\tilde T_t$ and by
\begin{equation*}
  \tilde\phi_t:=\delta^1\wedge\delta^2\wedge\delta^3
  +\delta^1\wedge\tilde\omega_{j,t,1}
  +\delta^2\wedge\tilde\omega_{j,t,2}
  -\delta^3\wedge\tilde\omega_{j,t,3}
\end{equation*}
on $\tilde T_{j,t}$.  Define the function $r_t\co Y_t\to [0,\zeta]$ by
\begin{equation}\label{eq:rt}
  r_t(p):=
  \begin{cases}
    |\pi_{j,t}(y)| & \text{for}~ p=[(x,y)] \in \tilde T_{j,t} \\
    \zeta & \text{for}~ p \in Y_t \setminus \tilde T_t
  \end{cases}
\end{equation}
and set
\begin{equation}\label{eq:Rt}
  R_{j,t}:=\tilde T_{j,t}\cap r_t^{-1}[\zeta/4,\zeta/2]
  \quad\text{and}\quad
  R_t:=\bigcup_j R_{j,t}=r_t^{-1}[\zeta/4,\zeta/2].
\end{equation}
Outside $R_t$ the $3$--form $\smash{\tilde\phi_t}$ defines a torsion-free
$\rG_2$--structure, while on $R_{j,t}$ it satisfies
$\nabla^k(\tilde\phi_t-\hat\phi_{j,t})=O(t^4)$ for $k\geq 0$ and
similarly, for each fixed $\epsilon>0$, on $r_t^{-1}[\epsilon,\zeta]$
we have $\nabla^k(\tilde\phi_t-\phi_0)=O(t^4)$ for $k\geq 0$.  In
particular, $\tilde\phi_t$ defines a $\rG_2$--structure on $Y_t$
provided $t>0$ is sufficiently small.

We equip $Y_t$ with the Riemannian metric $\tilde g_t:=g_{\tilde
  \phi_t}$ associated with $\tilde\phi_t$.

\begin{remark}\label{rmk:mcmp}
  Note that on the complement of $\tilde T_t$ the metric $\tilde g_t$
  agrees with the flat metric $g_0$ on $(T^7/\Gamma)\setminus T$ and
  on $\tilde T_{j,t}\setminus R_{j,t}$ it agrees with the metric
$$g_{\hat\phi_{j,t}}=g_{\R^3}\oplus t^2g_{X_j}.$$  
Here $g_{\smash{\R^3}}$
  denotes the standard metric on $\R^3$ and $g_{\smash{X_j}}$ denotes the
  metric on $X_j$.  Moreover, since the map $\phi\mapsto g_\phi$ is
  smooth, on $R_{j,t}$ we have $\nabla^k(\tilde g_t-g_{\smash{\R^3}}\oplus
  t^2g_{\smash{X_j}})=O(t^4)$ for $k\geq 0$ and, for each fixed
  $\epsilon>0$, on $r_t^{-1}[\epsilon,\zeta]$ we have
  $\nabla^k(\tilde g_t - g_0)=O(t^4)$ for $k\geq 0$.
\end{remark}

\begin{theorem}[Joyce~{\cite[Theorems~A and~B]{Joyce1996a}};
{\cite[Theorem~2.2.1]{Joyce1996b}}]\label{thm:joyce}
  There are constants $T,c>0$ and for each $t\in(0,T)$ a $2$--form
  $\eta_t$ on $Y_t$ such that $\phi_t:=\tilde\phi_t+\rd\eta_t$ defines
  a torsion-free $\rG_2$--structure and
  \begin{equation}\label{eq:dee}
    \|\rd\eta_t\|_{L^\infty} \leq ct^{1/2}.
  \end{equation}
\end{theorem}

\begin{remark}
  In view of \fullref{thm:fg} the above is tantamount to saying
  that one can solve the non-linear partial differential equation
  \begin{equation}\label{eq:j}
    \rd\Theta\big(\tilde\phi_t+\rd\eta_t\big)=0
  \end{equation}
  with estimates on $\rd\eta_t$.  For small $\eta_t$, the dominant part
  of this equation is essentially the Laplacian on $2$--forms.  Now, as
  $t>0$ decreases the size of $\rd\Theta(\tilde\phi_t)$ becomes
  smaller and smaller, but at the same time the mapping properties of
  the Laplacian degenerate.  Solving \eqref{eq:j} thus is a rather
  delicate balancing act.
\end{remark}

For our application we need to slightly strengthen the estimate in
\fullref{thm:joyce}.  Let $w_t(x,y):=t+\min\{r_t(x),r_t(y)\}$.
For a H\"older exponent $\alpha\in(0,1)$ define
\begin{align*}
  [f]_{C^{0,\alpha}_{0,t}(U)}
  &:= \sup_{d(x,y)\leq w_t(x,y)} w_t(x,y)^\alpha
  \frac{|f(x)-f(y)|}{d(x,y)^\alpha}, \\[1ex]
  \|f\|_{C^{0,\alpha}_{0,t}(U)} 
  &:= \|f\|_{L^\infty(U)} + [f]_{C^{0,\alpha}_{0,t}(U)},
\end{align*}
for a tensor field $f$ over $U\subset Y_t$. Here we use parallel
transport to compare the values of $f$ at various points of $U$.
If $U$ is unspecified, then we take $U=Y_t$.

\begin{prop}\label{prop:joyce+}
  The constants $T,c>0$ in \fullref{thm:joyce} can be chosen such
  that for all $t\in(0,T)$ we have
  \begin{equation*}
    \|\rd\eta_t\|_{C^{0,\alpha}_{0,t}} \leq ct^{1/2}
    \quad\text{and}\quad
    \bigl\lVert
\Theta(\phi_t)-\Theta\big(\hat\phi_{j,t}\big)
\bigr\rVert_{C^{0,\alpha}_{0,t}(\tilde T_{j,t})} \leq ct^{1/2}.
  \end{equation*}
\end{prop}

For the proof of this result it will be helpful to note the following.

\begin{prop}\label{prop:mcmp}
  For each $\mu>0$ and $K\in\N_0$ there exists a constant $\epsilon>0$
  such that the following holds for all $t\in(0,T)$ and $p\in Y_t$:
  $R:=\epsilon(t+r_t(p))$ is less than the injectivity radius of
  $(Y_t,\tilde g_t)$ at $p$ and if we identify $T_pY$ isometrically
  with $\R^7$ and denote by $s_R\co B_1\to B_R(p)$ the map obtained by
  multiplication with $R$ followed by the exponential map, then
  \begin{equation}\label{eq:mcmp}
    \big|\del^k\big(R^{-2}s_R^*\tilde g_t - g_{\R^7}\big)\big|\leq \mu
  \end{equation}
  for all $k\in\{0,\ldots,K\}$.  Here $g_{\R^7}$ denotes the standard
  metric on $\R^7$.
\end{prop}

\begin{proof}
  From \fullref{rmk:mcmp} it is clear that we can find $\epsilon>0$
  such that the above statement holds for all $p \in
  r_t^{-1}[\zeta/8,\zeta]$.  Moreover, for $p\in
  r_t^{-1}[0,\zeta/8]$ inequality \eqref{eq:mcmp} is equivalent to
  \begin{equation*}
    \big|\del^k\big({\tilde R}^{-2}s_{\tilde R}^*(g_{\R^3}\oplus g_{X_j}) -
      g_{\R^7}\big)\big|\leq \mu,
  \end{equation*}
  where $\tilde R:=\epsilon(1+|\pi_{j}(y)|)$ and $p=[(x,y)]$. Because
  of \eqref{eq:aledecay} this holds for all $\epsilon\leq \frac12$ as
  long as $|\pi_{j}(y)|$ is sufficiently large, say, $|\pi_{j}(y)|>N$.
  For $|\pi_{j}(y)| \leq N$ it can be arranged to hold by choosing
  $\epsilon>0$ sufficiently small.
\end{proof}

\begin{proof}[Proof of \fullref{prop:joyce+}]
  Note that the second part follows from the first and the
  construction of $\tilde\phi_t$, because $\Theta$ is a smooth map.
  To obtain the estimate on $\rd\eta_t$ recall from Joyce's
  construction that $\eta_t$ solves a non-linear partial differential
  equation that can be written schematically as
  \begin{equation}\label{eq:ddP}
    \rd^*\rd \eta_t + P(\rd\eta_t,\nabla\rd\eta_t) =
    G(\rd\eta_t,\ldots@) \quad\text{and}\quad \rd^*\eta_t=0;
  \end{equation}
  see Joyce~\cite[Equation~(33)]{Joyce1996a}.  The crucial points are
  that $P(x,y)$ is a smooth function which depends linearly on $y$ and
  satisfies $P(0,y)=0$ and that there is a constant $c>0$ such that
  \begin{equation}\label{eq:G}
    \|G(\rd\eta_t,\ldots@)\|_{L^\infty}\leq ct^{1/2}.
  \end{equation}
  Now, define
  \begin{equation*}
    D_t\sigma:=(\rd^*\sigma+P(\rd\eta_t,\nabla\sigma),\rd\sigma).
  \end{equation*}  
  Since $\rd\eta_t$ is small provided $T>0$ is small, this a small
  perturbation of the operator $\rd^*\oplus\rd$.  We extend $D_t$ to
  an operator from $\Omega^*(Y_t)$ to itself by defining
  $D_t\sigma=(\rd^*\oplus\rd)\sigma$ for $\sigma\in\Omega^k(Y_t)$ with
  $k\neq 3$, so that it becomes an elliptic operator.  We will now
  prove that there are constants $c>0$ and $\epsilon\in(0,\frac12)$
  such that for all $t\in(0,T)$ and each $p\in Y_t$ the following
  holds:
  \begin{equation}\label{eq:dts}
    R^\alpha [\sigma]_{C^{0,\alpha}(B_{R/2}(p))}
    \leq c \big(R\|D_t \sigma\|_{L^\infty(B_{R}(p))} +
\|\sigma\|_{L^\infty(B_{R}(p))}\big)
  \end{equation}
  with $R:=\epsilon(t+r_t(p))$.  From this the asserted bound on
  $[\rd\eta_t]_{@\smash{C^{0,\alpha}_{0,t}}}$ follows at once using
  \eqref{eq:dee}, \eqref{eq:ddP} and \eqref{eq:G}, since on
  $B_{R/2}(p)$ we have $w_t\leq 2\epsilon^{-1}R$.

  For $\mu>0$ choose $\epsilon>0$ according to \fullref{prop:mcmp}
with $K=1$.  Let $s_R\co \smash{B_1^7}\to B_R(p)$ be as in
\fullref{prop:mcmp}.  We define a rescaled operator $\tilde
  D_{t,p}\co\Omega^*(B_1)\to\Omega^*(B_1)$ by
  \begin{equation*}
    \tilde D_{t,p} \sigma := \big(R^2 s_R^*\tau,s_R^*\theta\big)
  \end{equation*}
  for $\sigma\in\Omega^k(B_1)$, where
  $(\tau,\theta):=D_t(s_R^{-1})^*\sigma \in
  \Omega^{k-1}(B_1)\oplus\Omega^{k+1}(B_1)$.  It follows from
  \fullref{thm:joyce} and \fullref{prop:mcmp} that by
  choosing $T,\mu>0$ sufficiently small, we can arrange that for all
  $t\in(0,T)$ and $p\in Y_t$ the rescaled operator $\tilde D_{t,p}$ is
  as close to $\rd\oplus \rd^*\co \Omega^*(B_1)\to\Omega^*(B_1)$ as we
  wish.  In particular, we can arrange that the family of operators
  $\tilde D_{t,p}$ is uniformly elliptic with coefficients uniformly
  bounded in $C^1$.  Hence, by standard elliptic theory, we can find a
  constant $c>0$ independent of $t\in(0,T)$ and $p\in Y_t$ such that
  the following $L^q$ estimate holds:
  \begin{equation*}
    \|\sigma\|_{W^{1,q}(B_{1/2})}\leq c \big(\|\tilde D_{t,p}
    \sigma\|_{L^q(B_1)} + \|\sigma\|_{L^q(B_1)}\big).
  \end{equation*}
  Combined with the Sobolev embedding $W^{1,q}\into C^{0,1-7/q}$ this
  yields
  \begin{equation*}
    [\sigma]_{C^{0,\alpha}(B_{1/2})}\leq c \big(\|\tilde
    D_{t,p}\sigma\|_{L^\infty(B_1)} + \|\sigma\|_{L^\infty(B_1)}\big)
  \end{equation*}
  with $c>0$ independent of $t\in(0,T)$ and $p\in Y_t$.  This,
  however, is equivalent to the estimate \eqref{eq:dts} for the
  unscaled operator $D_t$.
\end{proof}

\begin{remark}
  \fullref{prop:joyce+} can be viewed as a quantification of
  Joyce's proof of the fact that $\eta_t$ is smooth.  In a similar
  fashion, one can also obtain estimates on higher H\"older norms of
  $\rd\eta_t$.
\end{remark}

\begin{remark}\label{rmk:nw}
  The kind of argument we used above goes back to work of
  Nirenberg--Walker~\cite[Theorem~3.1]{Nirenberg1973}.  We will
  encounter this line of reasoning again in the proofs of Propositions
  \ref{prop:decay} and \ref{prop:mse}.
\end{remark}

\section{ASD instantons on ALE spaces}
\label{sec:asdale}

Let $\Gamma$ be a finite subgroup of $\SU(2)$, let $X$ be an ALE space
asymptotic to $\C^2/\Gamma$ and let $E$ be a $G$--bundle over $X$.  We
denote by $\sA(E)$ the space of connections on $E$.

\begin{definition}
  A \emph{framing at infinity} of $E$ is a bundle isomorphism $\Phi\co
  E_\infty|_U \to \pi_*E|_U$ where $E_\infty$ is a $G$--bundle over
  $(\C^2\setminus \{0\})/\Gamma$ and $U$ is the complement of a
  compact neighbourhood of the singular point in $\C^2/\Gamma$.
\end{definition}

Let $\theta$ be a flat connection on a $G$--bundle $E_\infty$ over
$(\C^2\setminus\{0\})/\Gamma$.

\begin{definition}
  Let $\Phi\co E_\infty|_U \to \pi_*E|_U$ be a framing at infinity of
  $E$.  Then a connection $A\in\sA(E)$ is called \emph{asymptotic to
    $\theta$ at rate $\delta$ with respect to $\Phi$} if
  \begin{equation}\label{eq:Adecay}
    \nabla^k (\Phi^*A-\theta)=O\big(r^{\delta-k}\big)
  \end{equation}
  for all $k\geq 0$.  Here $\nabla$ is the covariant derivative
  associated with $\theta$.
\end{definition}

\begin{definition}
  A \emph{framed ASD instanton asymptotic to $\theta$ (at rate
    $\delta$)} is an ASD instanton $A\in\sA(E)$ on $E$ together with a
  framing at infinity $\Phi$ of $E$ such that $A$ is asymptotic to
  $\theta$ at rate $\delta$ with respect to $\Phi$.  If no rate
  $\delta$ is specified, then we take $\delta=-3$.
\end{definition}

\begin{prop}\label{prop:Adecay}
  Let $A\in\sA(E)$ be an ASD instanton on $E$ with finite energy, that
  is,
  \begin{equation*}
    \int_X |F_A|^2 \dvol < \infty,
  \end{equation*}
  then there is a $G$--bundle $E_\infty$ over
  $(\C^2\setminus\{0\})/\Gamma$ together with a flat connection
  $\theta$ and a framing $\Phi\co E_\infty|_U \to \pi_*E|_U$ such that
  \eqref{eq:Adecay} holds with $\delta=-3$
\end{prop}

\begin{proof}
  We extend the argument in Donaldson--Kronheimer~\cite[page~98]{Donaldson1990}.  The
  topological space $\hat X:=X\cup\{\infty\}$ can be given the
  structure of an orbifold whose atlas contains the charts of $X$ as
  well as a uniformising chart at infinity $\varphi\co
  B_\epsilon/\Gamma \to \hat X$ which is constructed as follows.  Fix
  an orientation reversing linear isometry $\sigma$ of $\R^4$.  We let
  $\Gamma$ act on $B_\epsilon$ by $(g,x)\mapsto
  \sigma^{-1}(g\cdot\sigma(x))$ and define $\varphi(0):=\infty$ and
  $\varphi(x)=\pi^{-1}(\sigma(x)/|x|^2)$.  If $g$
  denotes the metric on $X$, then the conformally equivalent metric
  $\hat g:=(1+|\pi|^2)^{-2}g$ extends to $\hat X$ as an orbifold
  metric.  The metric is not necessarily smooth, but only
  $C^{3,\alpha}$; however, that does not cause any problems.  One
  should think of $\hat X$ as a conformal compactification of $X$ in
  the same way that $S^4$ is a conformal compactification of $\R^4$.
  
  Since the equation $\smash{F_A^+}=0$ as well as the energy are conformally
  invariant, we can think of $A$ as a finite energy ASD instanton on
  $(\hat X\setminus\{\infty\},\hat g)$.  By Uhlenbeck's removable
  singularities theorem~\cite[Theorem~4.1]{Uhlenbeck1982}, the
  pullback of $A$ to $B_\epsilon\setminus\{0\}$ extends to a
  $\Gamma$--invariant ASD instanton over all of $B_\epsilon$.  Hence,
  $A$ extends to an ASD instanton $\hat A$ on an orbifold $G$--bundle
  $\hat E$ over $\hat X$.  Using radial parallel transport from
  $\infty$ we obtain a trivialisation of $\hat E$ over
  $\varphi(B_\epsilon/\Gamma)$ in which the connection matrix
  representing $\hat A$ vanishes at $\infty=\varphi(0)$.  Denote by
  $\rho\co\Gamma\to G$ the monodromy representation associated with
  $\hat E|_\infty$.  Associated with $\rho$ there are a $G$--bundle
  $E_\infty$ over $\varphi((B_\epsilon\setminus\{0\})/\Gamma)$ and a flat
  connection $\theta$ on $E_\infty$.  The above trivialisation of
  $\hat E$ over $\varphi(B_\epsilon/\Gamma)$ amounts to a bundle
  isomorphism $\Phi\co E_\infty \to \hat
  E|_{\smash{\varphi(B_\epsilon\setminus\{0\}/\Gamma)}}$ and the fact that the
  connection matrix representing $\hat A$ vanishes at
  $\infty=\varphi(0)$ implies that
$\nabla^k(\varphi^*(\Phi^*(``\hat
  A\:)-\theta))=O(x^{1-k})$ for all $k\geq 0$.  By considering the
  action of the inversion $x\mapsto\sigma(x)/|x|^2$ on $k$--fold
  derivatives of $1$--forms one sees that $\nabla^k
  (\Phi^*A-\theta)=O(r^{-3-k})$.
\end{proof}

Let us briefly discuss moduli spaces of framed ASD instantons on $E$
asymptotic to $\theta$.  For a detailed discussion we refer the reader
to Nakajima's beautiful article~\cite{Nakajima1990}.  Fix a framing at
infinity $\Phi$ of $E$, a rate $\delta\in(-3,-1)$ and denote by
$\sA(E,\theta)$ the space of all connections asymptotic to $\theta$ at
rate $\delta$ with respect to $\Phi$.  Similarly, define $\sG(E)$ to
be the group of gauge transformations asymptotic to a constant element
of $G$ at infinity at rate $\delta+1$ with respect to $\Phi$.  Denote
by $g_\infty\co \sG(E)\to G$ the homomorphism assigning to each gauge
transformation its asymptotic value at infinity and let
$\sG_0(E):=\ker g_\infty \subset\sG(E)$ be the based gauge group
consisting of gauge transformations asymptotic to the identity. Then
the space
\begin{equation*}
  M(E,\theta):=\{A \in \sA(E,\theta) : F_A^+=0\}/\sG_0(E)
\end{equation*}
is called the \emph{moduli space of framed ASD instantons on $E$
  asymptotic to $\theta$}.

\begin{remark}
  The space does not depend on the choice of $\delta\in(-3,-1)$.  This
  is a consequence of \fullref{prop:Adecay}.
\end{remark}

\begin{remark}
  If we denote by $\rho\co\Gamma\to G$ the monodromy representation
  associated with $\theta$ and by $G_\rho:=\left\{g\in G: g\rho
    g^{-1}=\rho\right\}$ the stabiliser of $\rho$, then $G_\rho\subset
  G\iso\sG(E)/\sG_0(E)$ acts on $M(E,\theta)$.
\end{remark}

\begin{theorem}[Nakajima {\cite[Theorem~2.6 and
Proposition~5.1]{Nakajima1990}}]
  The moduli space $M(E,\theta)$ is a smooth hyperk\"ahler manifold.
\end{theorem}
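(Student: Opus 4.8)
The plan is to realise $M(E,\theta)$ as an infinite-dimensional hyperk\"ahler quotient, in the spirit of Hitchin--Karlhede--Lindstr\"om--Ro\v{c}ek and of Kronheimer's hyperk\"ahler-quotient description of the ALE spaces themselves, adapting the ASD moduli theory of compact $4$--manifolds to the ALE end. First I would fix weighted H\"older (or Sobolev) completions: choose a decay weight $\delta$ in the admissible range --- fast enough that $\|F_A\|_{L^2}<\infty$ and that the relevant linearised operators are Fredholm on the conical end, but no faster than $r^{-3}$ --- and model $\sA(E,\theta)$ on the affine space of connections $A=A_\theta+a$ with $a\in\Omega^1(X,\frg_E)$ of this decay, with $\sG_0$ the corresponding Banach Lie group of gauge transformations asymptotic to $\id$. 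On a hyperk\"ahler surface $\Lambda^1$ carries a parallel quaternionic structure and $\Lambda^2_+$ is trivialised by the parallel forms $\tilde\omega_1,\tilde\omega_2,\tilde\omega_3$; using the complex structures $I_1,I_2,I_3$ these induce on $\Omega^1(X,\frg_E)$, together with the flat $L^2$ metric $\<a,b\>=\int_X\<a\wedge*b\>$, the space $\sA(E,\theta)$ becomes a (weakly) flat hyperk\"ahler Banach manifold on which $\sG_0$ acts by isometries preserving each of the symplectic forms $\omega_i(a,b)=\int_X\<a\wedge b\>\wedge\tilde\omega_i$.

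Next I would identify the hyperk\"ahler moment map. For the $\sG_0$--action the moment map with respect to $\omega_i$ is $\mu_i(A)=\<F_A,\tilde\omega_i\>$, so that under the trivialisation of $\Lambda^2_+$ the triple $\mu=(\mu_1,\mu_2,\mu_3)$ is exactly $A\mapsto F_A^+$. Checking $\rd\<\mu_i,\xi\>(a)=\omega_i(a,\rd_A\xi)$ for $\xi\in\mathrm{Lie}(\sG_0)$ is an integration by parts using $\rd\tilde\omega_i=0$; the boundary term over large spheres $S^3_R/\Gamma$ involves $\<a,\xi\>$ and tends to $0$ as $R\to\infty$ precisely because $a$ and $\xi$ decay. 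Hence
\begin{align*}
  M(E,\theta)=\mu^{-1}(0)/\sG_0=\{A:F_A^+=0\}/\sG_0
\end{align*}
is the hyperk\"ahler quotient of $\sA(E,\theta)$ by $\sG_0$, and therefore carries a hyperk\"ahler structure at every point where the quotient is a smooth manifold.

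It remains to prove smoothness, which needs freeness of the $\sG_0$--action and vanishing of the obstruction. For freeness, $g\in\sG_0$ with $g\cdot A=A$ satisfies $\rd_A g=0$, so $|g-\id|$ has constant norm along the connected manifold $X$ and tends to $0$ at infinity; hence $g\equiv\id$. For the obstruction I would write down the deformation complex $\Omega^0(X,\frg_E)\xrightarrow{\rd_A}\Omega^1(X,\frg_E)\xrightarrow{\rd_A^+}\Omega^2_+(X,\frg_E)$ in the weighted spaces; by the Fredholm theory of elliptic operators on ALE manifolds (Lockhart--McOwen) it is an elliptic Fredholm complex, so $H^1$ --- the tangent space of $M(E,\theta)$ --- is finite dimensional, and it suffices to show $H^0=H^2=0$. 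Here the key input is the Weitzenb\"ock formula: since $X$ is hyperk\"ahler one has $W^+=0$ and scalar curvature $s=0$, and since $A$ is ASD one has $F_A^+=0$, so the twisted Laplacians on $\Omega^0(X,\frg_E)$ and on $\Omega^2_+(X,\frg_E)$ both reduce to $\nabla^*\nabla$. Any class in $H^0$ or $H^2$ is then represented by a $\nabla$--parallel section of the prescribed decay, which has constant pointwise norm and hence vanishes. Thus $H^0=H^2=0$, the hyperk\"ahler quotient is a smooth finite-dimensional manifold with dimension given by the index of the complex, and the descended $L^2$ metric together with $I_1,I_2,I_3$ is a genuine hyperk\"ahler structure.

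The leftover points --- Hausdorffness of $M(E,\theta)$, independence of the metric and the $\omega_i$ of the chosen weight and completion within the admissible range, and uniformity of the elliptic estimates (for which the bound $\|F_A\|_{L^2}<\infty$ is used) --- are standard patching arguments, and completeness of the metric is not claimed. The hard part will be the ALE weighted analysis: arranging a single weight $\delta$ for which convergence of the $L^2$ metric and of the $\omega_i$, Fredholmness of the linearised operator, and the vanishing of the moment-map boundary terms all hold at once, and checking that the hyperk\"ahler quotient formalism is preserved under passage to these Banach completions; the Weitzenb\"ock and vanishing arguments are, by comparison, routine.
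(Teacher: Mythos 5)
Your proposal follows essentially the same route as the paper's discussion: the paper also presents $M(E,\theta)$ as an infinite-dimensional hyperk\"ahler reduction of $\sA(E,\theta)$ by $\sG_0$ with moment map $\mu(A)=F_A^+$, set up in weighted Sobolev spaces, and establishes that the deformation theory is governed by $\delta_A=(\rd_A^*,\rd_A^+)$ with vanishing obstruction (paper's Proposition~\ref{prop:decay}). Your Weitzenb\"ock argument for $H^0=H^2=0$ is the same mechanism the paper uses; so far so good.

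Two points need tightening, one minor and one more substantive. Minor: on a non-compact manifold, $\nabla^*\nabla a=0$ does not by itself make $a$ $\nabla$--parallel; you need either to justify the integration by parts using the prescribed decay, or to observe that $|a|^2$ is subharmonic and invoke the maximum principle with the decay at infinity. Substantive: Fredholmness in some weighted space gives $\dim H^1<\infty$, but it does not tell you that the harmonic representatives of $H^1$ lie in $L^2$, which is what you need for the descended metric and the $\omega_i$ to be finite (and weight-independent). This is exactly what Proposition~\ref{prop:decay}(1) supplies and what you flag as ``the hard part'' without executing: the paper derives a refined Kato inequality $|\rd|a||\leq\sqrt{3/4}\,|\nabla_A a|$ from $\delta_Aa=0$, feeds it into the Weitzenb\"ock formula to get $\Delta|a|^{2/3}\leq O(r^{-4})|a|^{2/3}$, and then a barrier/maximum-principle argument on the asymptotically Euclidean end yields the sharp decay $a=O(r^{-3})$, which is $L^2$ on an ALE space. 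Without that argument (or an equivalent), the claim that the quotient genuinely carries a hyperk\"ahler \emph{metric}, rather than merely being a smooth finite-dimensional manifold, is left open; with it, the tangent space is literally the $L^2$--kernel of $\delta_A$ and the metric statement follows.
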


Formally, this can be seen as an infinite-dimensional instance of a
hyperk\"ahler reduction (see
Hitchin--Karlhede--Lindstr\"om--Ro\v{c}ek~\cite{Hitchin1987}).  The space $\sA(E,\theta)$
inherits a hyperk\"ahler structure from $X$ and the action of the based
gauge group $\sG_0$ has a hyperk\"ahler moment map given by
$\mu(A)=\smash{F_A^+}$.  To make this rigorous one needs to set up a suitable
Kuranishi model for $M(E,\theta)$ along the lines of
Donaldson--Kronheimer~\cite[Section~4.2.5]{Donaldson1990}.  This can be done using weighted
Sobolev space completions of $\sA(E,\theta)$ and $\sG_0(E)$; see
Nakajima~\cite[Section 2]{Nakajima1990} for a detailed discussion.  An
important role is played by the operator $\delta_A\co
\Omega^1(X,\frg_E)\to\Omega^0(X,\frg_E)\oplus\Omega^+(X,\frg_E)$
defined by
\begin{equation}\label{eq:deltaA}
  \delta_A(a):=\big(\rd_A^*a, \rd_A^+a\big)
\end{equation}
which governs the infinitesimal deformation theory of the ASD
instanton $A$.

\begin{prop}\label{prop:decay}
  Let $A\in\sA(E)$ be a finite energy ASD instanton on $E$.  Then the
  following holds.
  \begin{enumerate}
  \item\label{d:1} If $a\in\ker\delta_A$ decays to zero at infinity,
    then $\nabla_A^k a=O(|\pi|^{-3-k})$ for all $k\geq 0$.
  \item\label{d:2} If $(\xi,\omega)\in\ker\delta_A^*$ decays to zero
    at infinity, then $(\xi,\omega)=0$.
  \end{enumerate}
\end{prop}

\begin{remark}
  From the second part of this proposition one can deduce that the
  deformation theory of framed finite energy ASD instantons is always
  unobstructed; hence, $M(E,\theta)$ is a smooth manifold (see also
  \cite[Proposition~5.1]{Nakajima1990}).  By the first part the
  tangent space of $M(E,\theta)$ at $[A]$ agrees with the $L^2$ kernel
  of $\delta_A$ and thus the formal hyperk\"ahler structure is indeed
  well-defined.
\end{remark}

The proof of \fullref{prop:decay} rests on the following
refined Kato inequality.

\begin{prop}\label{prop:kato}
  Let $A\in\sA(E)$ be an ASD instanton on $E$.  If
  $a\in\Omega^1(X,\frg_E)$ satisfies $\delta_Aa=0$, then
  \begin{equation}\label{eq:kato}
    |@\rd|a|@| \leq \sqrt{\!\tfrac34}@ \big|\nabla_A a\big|
  \end{equation}
  on the complement of the vanishing locus of $a$.
\end{prop}

\begin{proof}
  Recall that the Kato inequality follows from the Cauchy--Schwarz
  inequality $|\<\nabla_A a,a\>|\leq |\nabla_A\alpha||\alpha|$.  If
  $\delta_Aa=0$, then it is not hard to see that equality can only
  hold if $\nabla_A a=0$.  This shows that \eqref{eq:kato} holds with
  some constant $\epsilon<1$ instead of $\sqrt{3/4}$.

  To see that one can take $\epsilon=\sqrt{3/4}$ we follow an argument
  of Feehan~\cite[Section~3]{Feehan2001}; however, also note that we
  could simply read off the value from the table given in
  Calderbank~\cite[Appendix]{Calderbank2000}.  We can write $\delta_A$ as a
  Dirac-type operator
  \begin{equation*}
     \delta_A a = \sum_{i} \gamma(e_i)\nabla^A_{e_i}a.
  \end{equation*}
  Here $(e_i)$ is a local orthonormal frame and the Clifford
  multiplication $\gamma$ is defined by $\gamma(v)a := (-i_va,
  (v^*\wedge a)^+)$, where $v^*$ denotes the dual of $v$ with respect
  to the metric on $X$.  For $x\in X$ with $a(x) \neq 0$ and $\rd
  |a|(x) \neq 0$ pick an orthonormal basis $(e_i)$ of $T_xX$ with
  $e_1:= \nabla |a|/|\nabla |a||$.  Since $\delta_Aa=0$ and
  $|\gamma(v)a|=|v||a|$, we have
  \begin{equation*}
    \big|\rd |a|\big|^2=\big|\nabla_{e_1} |a|\big|^2\leq
  \big|\nabla^A_{e_1} a\big|^2
    = \big|\gamma(e_1)\nabla^A_{e_1}a\big|^2
    = \Big|\sum\nolimits_{i\geq2} \gamma(e_i)\nabla^A_{e_i}a\Big|^2
    \leq 3 \sum_{i\geq 2} \big|\nabla^A_{e_i}a\big|^2
  \end{equation*}
  and therefore
  \begin{equation*}
    4\big|\rd|a|\big|^2=4 \big|\nabla^A_{e_1} a\big|^2 \leq
    3 \sum_{i} \big|\nabla^A_{e_i}a\big|^2 =
    3\big|\nabla_Aa\big|^2.
  \end{equation*}
  This finishes the proof.
\end{proof}

\begin{proof}[Proof of \fullref{prop:decay}]
  First of all note that \eqref{d:1} implies \eqref{d:2}, because if
  $\delta_A^*(\xi,\omega)=0$, then 
$\smash{\rd_A^*}\rd_A\xi=0$ and
$\smash{\rd_A^+}\rd_A\xi=[\smash{F_A^+},\xi]=0$; therefore $\rd_A
  \xi=O(|\pi|^{-3})$. Thus integration by parts yields $\rd_A\xi=0$
  and, hence, $\xi=0$.  Similarly, one shows that $\omega=0$.

  We will first explain why \eqref{d:1} for $k=0$ implies the asserted
  estimates for $k>0$ as well.  The argument is similar to that in
  \fullref{prop:joyce+}.  For $x\in X$ set
  $R:=\frac12(1+|\pi(x)|)$.  We claim that there is a constant
  $c=c(k)>0$ independent of $x\in X$ such that
  \begin{equation}\label{eq:dse}
    R^{k}\|\nabla^k_A a\|_{L^\infty(B_{R/2}(x))} \leq c \|a\|_{L^\infty(B_R(x))}
  \end{equation}
  for all $a\in\ker\delta_A$.  This clearly implies \eqref{d:1} for $k>0$
  given the statement for $k=0$.  For $|\pi(x)|$ sufficiently large,
  say $|\pi(x)|>R_0$, the restriction of $A$ to $B_R(x)$ is
  arbitrarily close to a flat connection by \fullref{prop:Adecay}.  We rescale to a ball of radius one and denote
  the rescaled connection by $\tilde A$ and the rescaling of
  $\delta_A$ by $\tilde D_x$.  Then the family of operators $\tilde
  D_x$ is uniformly elliptic with coefficients uniformly bounded in $C^1$.
  Therefore, there is a constant $c>0$ independent of $x\in X$ such
  that the following Schauder estimates holds:
  \begin{equation*}
    \big\|\nabla^k_{\tilde A}\, a\big\|_{L^\infty(B_{1/2})} \leq
    c\big(\|\tilde D_x a\|_{C^{k,\alpha}(B_1)}+\|a\|_{L^\infty(B_1)}\big).
  \end{equation*}
  If $a$ is in the kernel of $\tilde D_x$, the first term vanishes.
  Rescaling this inequality yields \eqref{eq:dse} for
  $a\in\ker\delta_A$ and $|\pi(x)|>R$.  For $1/2\leq |\pi(x)|\leq
  R_0$, \eqref{eq:dse} follows from standard Schauder estimates.

  Let us now prove \eqref{d:1} for $k=0$.  Recall, for example, from
  Freed--Uhlenbeck~\cite[Equation~(6.25)]{Freed1991}, that the operator
  $\tilde\delta_A\co
  \Omega^1(X,\frg_E)\to\Omega^0(X,\frg_E)\oplus\Omega^+(X,\frg_E)$
  defined by $\tilde\delta_A(a):=(\rd_A^*a, \sqrt2@\rd_A^+a)$
  satisfies a Weitzenb\"ock formula of the form
  \begin{equation}\label{eq:wb}
    \tilde\delta_A^*\tilde\delta_A a
    = \nabla_A^*\nabla_A a + \{\Ric,a\} + \{F_A^-,a\}.
  \end{equation}
  Here $\{\,\cdot\,,\,\cdot\,\}$ denote certain universal bilinear forms,
  whose precise form, however, is not important for our purposes and
  $\Ric$ denotes the Ricci tensor of $X$.  In our situation, since $X$
  is hyperk\"ahler and thus Ricci flat, the second term vanishes.  Now,
  suppose that $\delta_Aa=0$ and thus $\tilde\delta_Aa=0$.  Then
  \fullref{prop:kato}, the identity
$$
\Delta|a|^2+2|\nabla_Aa|^2=2\<a,\nabla_A^*\nabla_Aa\>
$$ 
(see~\cite[Equation~(6.18)]{Freed1991}) and the Weitzenb\"ock formula
  \eqref{eq:wb} yield the following estimate on the complement of the
  vanishing locus of $a$:
  \begin{align*}
    3 \Delta |a|^{2/3}
   &\leq|a|^{-4/3}\big(\Delta|a|^2+\tfrac83\big|@\rd|a|@\big|^2\big) \\
   &\leq |a|^{-4/3}\big(\Delta|a|^2+2|\nabla_A a|^2\big) \\
   &=2|a|^{-4/3}\big\langle a,\nabla_A^*\nabla_Aa\big\rangle \\
   &=2|a|^{-4/3}\big(\big\langle\tilde\delta_A^*\tilde\delta_A
a,a\big\rangle+\big\langle\big\{F_A^-,a\big\},a\big\rangle\big) \\
   &\leq O(|\pi|^{-4}) |a|^{2/3}.
  \end{align*}
  In the last step we used $\tilde\delta_A a=0$ and
  $|F_A^-|=O(|\pi|^{-4})$, which is a consequence of
  \fullref{prop:Adecay}.

  Now, let $U:=\{x\in X : a(x)\neq 0\}$ and set $f:=|a|^{2/3}$.  We
  will show that $f=O(|\pi|^{-2})$ which is equivalent to the
  desired decay estimate for $a$.  It follows from the above that on
  $U$,
  \begin{equation*}
    \Delta f \leq \frac{cf}{1+|\pi|^4}
  \end{equation*}
  for some constant $c>0$.  Since $f$ is bounded, by
  Joyce~\cite[Theorem~8.3.6(a)]{Joyce2000}, there is a $g=O(|\pi|^{-1})$
  such that
 \begin{equation*}
    \Delta g=
    \begin{cases}
      (\Delta f)^+ & \text{on}~U\\ 
      0 & \text{on}~X\setminus U.
    \end{cases}
  \end{equation*}
  Here $(\,\cdot\,)^+$ denotes taking the positive part.  Since $g$ is
  superharmonic and decays to zero at infinity, the maximum principle
  implies that $g$ is non-negative.  The function $f-g$ is a
  subharmonic on $U$, decays to zero at infinity and is non-positive
  on the boundary of $U$; hence, by the maximum principle $f\leq g$
  and thus $f\leq g=O(|\pi|^{-1})$.  Now, $(\Delta
  f)^+=O(|\pi|^{-5})$ on $U$ and an application of~\cite[Theorem~8.3.6(b)]{Joyce2000} shows that we could, in fact,
  have chosen $g$ such that $g=O(|\pi|^{-2})$.  It follows that
  $f=O(|\pi|^{-2})$ as desired.
\end{proof}

The dimension of $M(E,\theta)$ can be computed using the following
index formula.
\vadjust{\goodbreak}

\begin{theorem}[Nakajima~{\cite[Theorem~2.7]{Nakajima1990}}]\label{thm:index}
  Let $A$ be a framed ASD instanton asymptotic to $\theta$.  Then the
  dimension of the $L^2$ kernel of $\delta_A$ is given by
  \begin{equation}\label{eq:index}
    \dim\ker\delta_A=-2\int_X p_1(\frg_E)
    +\frac{2}{|\Gamma|}\sum_{g\in\Gamma\setminus\{e\}}
    \frac{\chi_\frg(g)-\dim\frg}{2-\tr g}.
  \end{equation}
  Here $p_1(\frg_E)$ is the Chern--Weil representative of the first
  Pontryagin class of $E$ and $\chi_\frg$ is the character of $\Gamma$
  acting on $\frg$, the Lie algebra associated with $G$, via the
  monodromy representation $\rho\co\Gamma \to G$ of $\theta$.
\end{theorem}

\begin{proof}
  Let us briefly explain how to derive \eqref{eq:index} from
  Nakajima's formula, which can be written as
  \begin{multline}\label{eq:nif}
    \dim\ker\delta_A
   =-\int_X \(\dim\frg+p_1(\frg_E)\)\,\ch\(S^+\)\hat A(X) \\[-1ex]
    + \dim \frg^\Gamma
    + \frac{1}{|\Gamma|}\sum_{g\in\Gamma\setminus\{e\}}
       \chi_{\frg}(g)\frac{\tr g}{2-\tr g}.
  \end{multline}
  Here $\frg^\Gamma$ denotes the $\Gamma$--invariant part of $\frg$,
  $S^+$ denotes the positive spin bundle on $X$, and $\ch(S^+)$ and
  $\hat A(X)$ denote the Chern--Weil representatives of the Chern
  character of $S^+$ and the $\hat A$--genus of $X$, respectively.

  If $A$ is the product connection on the trivial bundle rank $1$
  bundle and $a$ lies in the $L^2$ kernel of $\delta_A$, then it
  follows from the fact that $X$ is Ricci-flat and the Weitzenb\"ock
  formula \eqref{eq:wb} that $\nabla^*\nabla a=0$ and then by
  integration by parts, which is justified because of the decay
  asserted by \fullref{prop:decay}, that $\nabla a=0$.  Since
  $a$ lies in $L^2$, it necessarily vanishes.  Therefore
  $\dim\ker\delta_A=0$ and \eqref{eq:nif} yields
  \begin{equation*}
    \int_X \ch(S^+)\hat A(X)=1+\frac{1}{|\Gamma|}\sum_{g\in\Gamma\setminus\{e\}}
       \frac{\tr g}{2-\tr g}.
  \end{equation*}
  By plugging this back into \eqref{eq:nif} we obtain
  \begin{equation*}
    \dim\ker\delta_A
    =-2\int_X p_1(\frg_E)
    + \dim \frg^\Gamma - \dim \frg
    + \frac{1}{|\Gamma|}\sum_{g\in\Gamma\setminus\{e\}}
       (\chi_{\frg}(g)-\dim \frg)\frac{\tr g}{2-\tr g}.
  \end{equation*}
  Since
  \begin{equation*}
    \frac{1}{|\Gamma|}\sum_{g\in\Gamma}
       (\chi_{\frg}(g)-\dim \frg)=\dim\frg^\Gamma - \dim \frg,
  \end{equation*}
  this leads to the index formula \eqref{eq:index} given above.
\end{proof}

There is a very rich existence theory for ASD instantons on ALE
spaces.  Gocho--Nakajima~\cite{Gocho1992} observed that for each
representation $\rho\co \Gamma \to \U(n)$ there is a bundle $\cR_\rho$
over $X$ together with an ASD instanton $A_\rho$ asymptotic to the
flat connection determined by $\rho$, and if $\sigma$ is a further
representation of $\Gamma$, then $A_{\rho\oplus\sigma} = A_\rho \oplus
A_\sigma$.  Kronheimer--Nakajima~\cite{Kronheimer1990} took this as
the starting point for an ADHM construction of ASD instantons on ALE
spaces.  One important consequence of their work is the following
rigidity result.

\begin{definition}\label{def:rigid}
  An ASD instanton $A$ is called \emph{infinitesimally rigid} if the
  $L^2$ kernel of the linear operator $\delta_A$ is trivial.
\end{definition}

\begin{theorem}[Kronheimer--Nakajima~{\cite[Lemma~7.1]{Kronheimer1990}}]
\label{thm:Rreg}
  For each $\rho\co\Gamma\to\U(n)$ the ASD instanton $A_\rho$ is
  infinitesimally rigid.
\end{theorem}

By combining this result applied to the regular representation with
the index formula Kronheimer--Nakajima derive a geometric version of
the McKay correspondence \cite[Appendix~A]{Kronheimer1990}.  Let
$\Delta(\Gamma)$ denote the Dynkin diagram associated with $\Gamma$ in
the ADE classification of the finite subgroups of $\SU(2)$.  Each
vertex of $\Delta(\Gamma)$ corresponds to a non-trivial irreducible
representation.  We label these by $\rho_1,\ldots,\rho_k$ and denote
the associated bundles by $\cR_j$ and the associated ASD instantons by
$A_j$.

\begin{theorem}[Kronheimer--Nakajima~{\cite[Appendix~A]{Kronheimer1990}}]
\label{thm:gmk}
  The harmonic $2$--forms $c_1(\cR_j)=\frac{i}{2\pi}\tr{F_{A_j}}$ form
  a basis of $L^2\cH^2(X)\iso{}H^2(X,\R)$ and satisfy
  \begin{equation*}
    \int_X c_1(\cR_i)\wedge c_1(\cR_j)= -(C^{-1})_{ij},
  \end{equation*}
  where $C$ is the Cartan matrix associated with $\Delta(\Gamma)$.
  Moreover, there is an isometry $\kappa\in\Aut(H_2(X,\Z),\cdot\,)$ such
  that $\{c_1(\cR_j)\}$ is dual to $\{\kappa[\Sigma_j]\}$, where
  $\Sigma_j$ are the irreducible components of the exceptional divisor
\vrule width 0pt height 10.5pt depth 3.5pt
  $E$ of $\smash{\tC\Gamma}$.  If $X$ is isomorphic to
  $\smash{\tC\Gamma}$ as a complex manifold, then $\kappa=\id$.
\end{theorem}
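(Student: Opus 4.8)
The plan is to deduce the statement from the rigidity Lemma~\ref{lem:Rreg}, the index formula (Theorem~\ref{thm:index}) and the algebro-geometric McKay correspondence, essentially following Kronheimer--Nakajima \cite{Kronheimer1990}*{Appendix~A}. The first point, that the $c_1(\cR_j)$ lie in $L^2\cH^2(X)$, is formal: since $A_j$ is ASD, $\tr F_{A_j}$ is anti-self-dual, and it is closed by the Bianchi identity, hence co-closed and therefore harmonic; finite energy of $A_j$ gives $\tr F_{A_j}\in L^2$, so $\tfrac{i}{2\pi}\tr F_{A_j}$ is the harmonic representative of $c_1(\cR_j)$ under $L^2\cH^2(X)\iso H^2(X,\R)$. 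Recall that this space is $k$--dimensional and that, in the basis $\{\PD[\Sigma_i]\}$ Poincaré dual to the exceptional curves, the bilinear form $(\alpha,\beta)\mapsto\int_X\alpha\wedge\beta$ restricts to the intersection form $\Sigma_i\cdot\Sigma_j=-C_{ij}$. Thus it suffices to prove that $\{c_1(\cR_j)\}$ is a basis dual, up to a lattice isometry $\kappa$, to $\{[\Sigma_j]\}$: once $\langle c_1(\cR_i),\kappa[\Sigma_j]\rangle=\delta_{ij}$ is known (with $\kappa=\id$ in the crepant-resolution case), writing $c_1(\cR_j)=\sum_l a_{jl}\,\PD[\kappa\Sigma_l]$, pairing against $\kappa[\Sigma_i]$ and using that $\kappa$ is an isometry forces $(a_{jl})=-C^{-1}$, whence $\int_X c_1(\cR_i)\wedge c_1(\cR_j)=-(C^{-1})_{ij}$; this matrix is negative definite, so the $c_1(\cR_j)$ are linearly independent, hence a basis.

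Second, I would prove the duality $\langle c_1(\cR_i),[\Sigma_j]\rangle=\delta_{ij}$. One route stays analytic: the index formula together with Lemma~\ref{lem:Rreg} shows that for every representation $\rho\co\Gamma\to\U(n)$,
\begin{align*}
  \int_X p_1(\frg_{\cR_\rho})=\frac1{|\Gamma|}\sum_{g\neq e}\frac{|\chi_\rho(g)|^2-n^2}{2-\tr g},
\end{align*}
where $\chi_\rho$ is the character of $\rho$, $\tr g$ is the trace of $g$ in the defining representation $\Gamma\subset\SU(2)$, and one uses the identification $\mathfrak u(n)\otimes\C\iso\rho\otimes\bar\rho$ of $\Gamma$--representations. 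Combining this with $\cR_{\rho\oplus\sigma}=\cR_\rho\oplus\cR_\sigma$, the behaviour of Pontryagin numbers under direct sums, the Chern--Weil identity $\int_X p_1(\frg_{\cR_\rho})=(n-1)\int_X c_1(\cR_\rho)^2-2n\int_X c_2(\cR_\rho)$, and the values of $\int_X c_2(\cR_\rho)$ obtained from the Gocho--Nakajima \cite{Gocho1992} model of the tautological bundles, one solves for the symmetric pairing $(\rho,\sigma)\mapsto\int_X c_1(\cR_\rho)\wedge c_1(\cR_\sigma)$ on the representation ring; an orthogonality-of-characters computation identifies its restriction to the nontrivial irreducibles with $-(C^{-1})_{ij}$, and integrality of the $c_1(\cR_j)$ upgrades this to the duality statement. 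More geometrically, when $X$ is biholomorphic to the crepant resolution $\wtilde{\C^2/\Gamma}$ one identifies $\cR_j$ with the locally free tautological sheaf and quotes the Gonzalez-Sprinberg--Verdier description of the resolution, which gives $\langle c_1(\cR_i),[\Sigma_j]\rangle=\delta_{ij}$ and hence $\kappa=\id$ directly.

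Third, for a general ALE space $X$ — which by Theorem~\ref{thm:ale} is diffeomorphic to $\wtilde{\C^2/\Gamma}$ but need not be biholomorphic to it — the configuration of holomorphic $(-2)$--curves of a complex structure in the hyperkähler family of $X$ is the image of $\{[\Sigma_j]\}$ under an isometry $\kappa$ of the intersection lattice, and the construction of the $\cR_j$ transforms accordingly, producing the asserted $\kappa\in\Aut(H_2(X,\Z),\cdot)$. Abstractly: once $\{c_1(\cR_j)\}$ is known to be an integral basis of $H^2(X,\Z)$ modulo torsion with $\int_X c_1(\cR_i)\wedge c_1(\cR_j)=-(C^{-1})_{ij}$, its dual basis $\{\Sigma'_j\}\subset H_2(X,\Z)$ has Gram matrix $-C$, so the linear map $[\Sigma_j]\mapsto\Sigma'_j$ is a lattice isometry, and it is the required $\kappa$.

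I expect the main obstacle to be the second step, the geometric McKay correspondence proper. The index formula only determines $\int_X p_1(\frg_{\cR_\rho})$, which is blind to $\int_X c_2(\cR_\rho)$ — it is in fact vacuous when $\dim\rho=1$, since then $\frg_{\cR_\rho}$ is a trivial real line bundle — so extracting the individual numbers $c_1(\cR_i)\cdot c_1(\cR_j)$ genuinely needs the explicit ADHM/Gocho--Nakajima model of the tautological bundles (or the algebro-geometric input); this is the one place where a formal argument does not suffice. The remaining bookkeeping with $\kappa$, and the identification $\kappa=\id$ for the crepant resolution, is then routine.
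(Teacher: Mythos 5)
The paper does not prove this theorem: it is stated with attribution to Kronheimer--Nakajima, and the only hint given is the one-line remark preceding it that the result is obtained ``by combining [Lemma~\ref{lem:Rreg}] applied to the regular representation with the index formula,'' with a reference to \cite{Kronheimer1990}*{Appendix~A}. So there is no in-paper argument against which to compare your proposal; what you are being asked to reproduce is the content of the cited appendix.

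That said, your sketch is a faithful and correct outline of that argument. The formal steps check out: $\tr F_{A_j}$ is anti-self-dual and closed, hence harmonic, and $L^2$ by finite energy; the reduction of the Gram-matrix claim to the duality $\langle c_1(\cR_i),\kappa[\Sigma_j]\rangle=\delta_{ij}$ is a correct linear-algebra computation (taking $c_1(\cR_j)=\sum_l a_{jl}\PD[\kappa\Sigma_l]$ and pairing gives $a=-C^{-1}$, whence the Gram matrix $-(C^{-1})_{ij}$); and the lattice bookkeeping in your last paragraph, showing that once the Gram matrix is known the dual basis automatically has Gram matrix $-C$ so that $[\Sigma_j]\mapsto\Sigma'_j$ defines the isometry $\kappa$, is right. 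Your Chern--Weil identity $\int_X p_1(\frg_{\cR_\rho})=(n-1)\int_X c_1(\cR_\rho)^2-2n\int_X c_2(\cR_\rho)$ is also correct. Most importantly, your diagnosis of where the argument cannot be completed formally is exactly right: the index formula plus rigidity only pins down $\int_X p_1(\frg_{\cR_\rho})$, which is insensitive to $\int_X c_2(\cR_\rho)$ and is identically zero when $\dim\rho=1$, so one cannot extract the individual numbers $\int_X c_1(\cR_i)\wedge c_1(\cR_j)$ without the explicit ADHM/tautological-sheaf description of the $\cR_j$ (equivalently, the Gonzalez-Sprinberg--Verdier picture when $X$ is the crepant resolution). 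That nontrivial geometric input is precisely what Kronheimer--Nakajima's Appendix~A supplies and what forces $\kappa=\id$ in the crepant case; your proposal correctly flags it as the real content, so there is no gap beyond what you have already acknowledged.
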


This result is very useful for computing the index of $\delta_A$ when
$A$ is constructed out of ASD instantons of the form $A_\rho$ (by
taking tensor products, direct sums, etc.).

\begin{prop}\label{prop:rigid}
  Let $X$ be an ALE space asymptotic to $\C^2/\Z_k$.  Denote by
  $\rho_j\co \Z_k`\to`U(1)$ the irreducible representation defined by
  $\rho_j(\ell)=\exp(\smash{\frac{2\pi i}{k}} j\ell)$.  For $n,m\in\Z_k$,
  let $E_{n,m}$ be the $\SO(3)$--bundle underlying
  $\R\oplus(\cR_n^*\otimes \cR_{n+m})$ and denote by $A_{n,m}$ the ASD
  instanton on $E_{n,m}$ induced by $A_n$ and $A_{n+m}$.  Then
  $A_{n,m}$ is infinitesimally rigid, asymptotic at infinity to the
  flat connection associated with $\rho_{m}$ and
  \begin{equation*}
    \frac{1}{8\pi^2}\int_X |F_{A_{n,m}}|^2=\frac{(k-m)m}{k}
  \end{equation*}
  as well as
  \begin{equation*}
    w_2(\frg_{E_{n,m}})=c_1(\cR_{n+m})-c_1(\cR_{n}) \in H^2(X,\Z_2). 
    \vadjust{\goodbreak}
  \end{equation*}
\end{prop}

\begin{proof}
  To see that $A_{n,m}$ is infinitesimally rigid apply
  \fullref{thm:Rreg} to $A_{n} \oplus A_{n+m}$ and observe that
  $\frg_{E_{n,m}}=\R\oplus(\cR_n^*\otimes \cR_{n+m})$ is a parallel
  subbundle of $\frg_{\cR_n\oplus\cR_{n+m}}$.

  The energy of $A_{n,m}$ can be computed using \fullref{thm:gmk} or
  by noting that the first term in the index formula \eqref{eq:index}
  is precisely twice the energy and the second term is given by
  $(-\smash{\frac2k})$--times
  \begin{equation*}
    -\sum_{g\neq e} \frac{\chi_\frg(g)-\dim\frg}{2-\tr
      g}=\sum_{j=1}^{k-1} \frac{1- \cos(2\pi mj/k)}{1-\cos(2\pi
      j/k)}=(k-m)m.
  \end{equation*}
  The statement about the second Stiefel--Whitney class is clear.
\end{proof}

\section[Approximate G_2--instantons]
{Approximate $\rG_2$--instantons}
\label{sec:approx}

Throughout this section, let $Y_0$ be an admissible $\rG_2$--orbifold,
let $\br=\{(X_j,\rho_j)\}$ be resolution data for $Y_0$ and denote by
$(Y_t,\phi_t)_{t\in(0,T)}$ the family of $\rG_2$--manifolds obtained
from $\br$ via \fullref{thm:joyce}.  Denote by
$\psi_t:=\Theta(\phi_t)$ the coassociative calibration on $Y_t$.  If
$\theta$ is a flat connection on a $G$--bundle $E_0$ over $Y_0$, then
the monodromy of $\theta$ around $S_j$ induces a representation
$\mu_j\co \pi_1(T_j,x_j)\iso(\Z^3 \times G_j) \rtimes H_j \to G$ of
the orbifold fundamental group of $T_j$ based at $x_j\in T_j\setminus
S_j$.

\begin{remark}
  For a general definition of orbifold fundamental group we refer the
  reader to Adem--Leida--Ruan~\cite[Definition~1.50 and Section~2.2]{Adem2007}.  All
  orbifold fundamental groups $\pi_1(X)$ encountered in this article
  can be identified with the fundamental groups $\pi_1(X^{\rm reg})$
  of the regular part of the orbifold in question, since the singular
  sets have sufficiently large codimension.
\end{remark}

\begin{definition}
  A collection
  $\bg=((E_0,\theta),\{(x_j,f_j)\},\{(E_j,A_j,\tilde\rho_j,m_j)\})$
  consisting of $E_0$ and $\theta$ as above as well as, for each $j$,
  the choice of
\begin{itemize}
\item a point $x_j \in T_j\setminus S_j$ together with a framing
  $f_j\co (E_0)_{x_j} \to G$ of $E_0$ at $x_j$,
\item a $G$--bundle $E_j$ over $X_j$ together with a framed ASD
  instanton $A_j$ asymptotic at infinity to the flat connection on the
  bundle $E_{\infty,j}$ over $(\C^2\setminus\{0\})/G_j$ induced by
  the representation $\mu_j|_{G_j}$,
\item a lift $\tilde\rho_j$ of the action $\rho_j$ of $H_j$ on $X_j$
  to $E_j$ and
\item  a homomorphism $m_j\co \Z^3 \to \sG(E_j)$
\end{itemize}
is called \emph{gluing data compatible with $\br=\{(X_j,\rho_j)\}$} if
the following compatibility conditions are satisfied:
\begin{itemize}
\item The action $\tilde\rho_j$ of $H_j$ on $E_j$ preserves $A_j$ and
  is asymptotic at infinity, with respect to the framing associated
  with $A_j$, to the action of $H_j$ on $E_{\infty,j}$.  Note that
  the lift of the action of $H_j$ on $E_{\infty,j}$ to the trivial
  bundle $G\times (\C^2\setminus\{0\})$ is given by
  $h\cdot(g,x)=(\mu_j(h)\cdot g,h\cdot x)$.
\item The action of $\Z^3$ on $E_j$ given by $m_j$ preserves $A_j$ and
  $m_j$ is asymptotic at infinity to $\mu_j|_{\Z^3}$, that is, $g_\infty
  \circ m_j=\mu_j|_{\Z^3}$ with $g_\infty\co \sG(E_j)\to G$ as in the
  paragraph following the proof of \fullref{prop:Adecay}.
\item For all $h\in{}H_j$ and $g\in\Z^3$ we have
  $\tilde\rho_j(h)m_j(g)\tilde\rho_j(h)^{-1}=m_j(hgh^{-1})$.
\end{itemize}
\end{definition}

We should point out here that it is by far not always possible to
extend a choice of $(E_0,\theta)$ and $\{(E_j,A_j)\}$ to compatible
gluing data.  This will become clear from the discussion in
\fullref{sec:ex}.

Before we proceed to construct approximate $\rG_2$--instantons, we
introduce weighted H\"older norms.  It will become more transparent over
the course of the next two sections that these are well adapted to the
problem at hand. We define weight functions by
\begin{equation*}
  w_t(x) := t + r_t(x)
  \quad\text{and}\quad
  w_t(x,y) := \min\{w_t(x),w_t(y)\}.
\end{equation*}
For $t\in(0,T)$, a H\"older exponent $\alpha\in(0,1)$ and a weight
parameter $\beta\in\R$ we define
\begin{align*}
  [f]_{C^{0,\alpha}_{\beta,t}(U)}
  &:= \sup_{d(x,y) \leq w_t(x,y)}
        w_t(x,y)^{\alpha-\beta} \frac{|f(x)-f(y)|}{d(x,y)^\alpha}, \\[1ex]
  \|f\|_{L^{\infty}_{\beta,t}(U)}
  &:=\big\|w_t^{-\beta}f\big\|_{L^\infty(U)}, \\
\|f\|_{C^{k,\alpha}_{\beta,t}(U)} 
  &:= \sum_{j=0}^k \big\|\nabla^j f\big\|_{L^{\infty}_{\beta-j,t}(U)}
                + \big[\nabla^j f\big]_{C^{0,\alpha}_{\beta-j,t}(U)}.
\end{align*}
Here $f$ is a section of a vector bundle over $U\subset Y_t$ equipped
with an inner product and a compatible connection.  On tensor bundles
associated with $Y_t$ we use the metrics induced by $\tilde g_t$;
however, in view of \fullref{prop:joyce+}, we could
equivalently use those induced by $\phi_t=\tilde \phi_t+\rd \eta_t$.
We use parallel transport to compare the value of $f$ at different
points in $Y$.  If $U$ is not specified, then we take $U=Y_t$.  We
denote by $\smash{C^{k,\alpha}_{\beta,t}}$ the Banach space $C^{k,\alpha}$
equipped with the norm $\|\cdot\|_{C^{k,\alpha}_{\beta,t}}$

\begin{remark}
  For fixed $t\in(0,T)$ and $\beta\in\R$, the norms
  $\smash{\|\cdot\|_{C^{k,\alpha}_{\beta,t}}}$ and are
  $\smash{\|\cdot\|_{C^{k,\alpha}}}$ equivalent, but not uniformly so as $t>0$
  tends to zero.
\end{remark}

Note that, if
$\beta=\beta_1+\beta_2$, then
\begin{equation}\label{eq:m}
  \|f\cdot{}g\|_{C^{k,\alpha}_{\beta,t}} \leq
  \|f\|_{C^{k,\alpha}_{\beta_1,t}}\cdot\|g\|_{C^{k,\alpha}_{\beta_2,t}}.
\end{equation}
Also for $\beta>\gamma$ we have
\begin{equation}\label{eq:wi}
  \|f\|_{C^{k,\alpha}_{\beta,t}} \leq
  t^{\gamma-\beta}\|f\|_{C^{k,\alpha}_{\gamma,t}}.
\end{equation}

\begin{prop}\label{prop:gdc}\label{prop:ag}
  Let $\bg$ be gluing data compatible with $\br$.  Then there is a
  constant $c>0$ and for each $t\in(0,T)$ a $G$--bundle $E_t$ over
  $Y_t$ together with a connection $\tilde A_t$ satisfying
  \begin{equation}\label{eq:e0e}
    \big\|F_{\tilde A_t}\wedge\psi_t\big\|_{C^{0,\alpha}_{-2,t}}\leq
    c t^{1/2}.
  \end{equation}
  Moreover, the adjoint bundle $\frg_{E_t}$ associated with $E_t$
  satisfies
  \begin{equation}\label{eq:p1'}
    p_1(\frg_{E_t})=-\sum_j k_j\,\PD[S_j]
   \quad\text{with }\,
    k_j:=\frac{1}{8\pi^2}\int_{X_j} |F_{A_j}|^2
  \end{equation}
  and
  \begin{equation}\label{eq:w2'}
    \langle w_2(\frg_{E_t}),[\Sigma]\rangle =\langle
w_2(\frg_{E_j}),[\Sigma]\rangle
  \end{equation}
  for each $[\Sigma]\in H_2(X_j)^{H_j}\subset H_2(Y_t)$.
\end{prop}

\begin{proof}
  The choices of $\tilde\rho_j$ and $m_j$ define a lift of the action
  of $\Z^3\rtimes H_j$ on $\R^3\times X_j$ to the pullback of $E_j$ to
  $\R^3\times X_j$.  Passing to the quotient yields a $G$--bundle over
  $(T^3\times X_j)/H_j$ which we denote by $E_j$, by abuse of
  notation.  It follows from the compatibility conditions that the
  pullback of $A_j$ to $\R^3\times X_j$ passes to the quotient and
  induces a connection on $E_j$ which we denote by $A_j$, again by
  abuse of notation.
  
  Fix $t\in(0,T)$.  Recall that in \eqref{eq:Rt} we defined
  $R_{j,t}:=\tilde T_{j,t}\cap r_t^{-1}[\zeta/4,\zeta/2]$ with $\tilde
  T_{j,t}$ and $r_t$ as defined in \eqref{eq:Tjt} and \eqref{eq:rt},
  respectively.  By the compatibility conditions the monodromy of
  $A_j$ along $S_j$ on the fibre at infinity matches up with the
  monodromy of $\theta$ along $E_0|_{S_j}$.  Thus, via parallel
  transport the framing of $E_0$ at $x_j$ and the framing of $E_j$
  yield an identification of $E_0|_{R_{j,t}}$ with $E_j|_{R_{j,t}}$.
  Patching $E_0$ and the $E_j$ via this identification yields the
  bundle $E_t$.

  Under the identification of $E_0|_{R_{j,t}}$ with $E_j|_{R_{j,t}}$,
  we can write
  \begin{equation}\label{eq:aj}
    A_j=\theta+a_j \quad\text{with }\,
    \nabla^k a_j=t^{2+k}O(r_t^{-3-k}),
  \end{equation}
  because of \fullref{rmk:mcmp} and \fullref{prop:decay}.
  Fix a smooth
  non-increasing function $\chi\co [0,\zeta]\to[0,1]$ such that
  $\chi(s)=1$ for $s\leq\zeta/4$ and $\chi(s)=0$ for $s\geq\zeta/2$.
  Set $\chi_t:=\chi\circ r_t$.  After cutting off $A_j$ to
  $\theta+\chi_t\cdot a_j$ it can be matched with $\theta$ and we
  obtain the connection $\tilde A_t$ on the bundle $E_t$.

  To estimate $\smash{F_{\tilde A}}\wedge\psi_t$ note that on $Y_t \setminus
  \tilde T_t$ the connection $\tilde A_t$ is flat.  Thus we can focus
  our attention on $\tilde T_{j,t}$.  By the definition of $\tilde
  A_t$ we have
  \begin{equation*}
    F_{\tilde A_t}=\chi_tF_{A_j}
          +\rd\chi_t\wedge a_j
          +\frac{\chi_t^2-\chi_t}2[a_j\wedge a_j].
  \end{equation*}
  The last two terms in this expression are supported in $R_{j,t}$ and of
  order $t^2$ in $C^{0,\alpha}$ by \eqref{eq:aj}.  By
  \fullref{ex:asdg2} and \fullref{prop:joyce+} we have
  \begin{equation*}
    \big\|F_{A_j}\wedge\psi_t\big\|_{C^{0,\alpha}_{-2,t}(\tilde
      T_{j,t})}
    = \big\|F_{A_j}\wedge\big(\psi_t-\hat\psi_t\big)\big\|_{C^{0,\alpha}_{-2,t}(\tilde
      T_{j,t})}
    \leq ct^{1/2} \big\|F_{A_j}\big\|_{C^{0,\alpha}_{-2,t}}.
  \end{equation*}
  It follows from \fullref{prop:Adecay} and \fullref{rmk:mcmp} that
  \begin{equation*}
    \nabla^k F_{A_j}=t^{2+k}O(r_t^{-4-k}).
  \end{equation*}
  This implies that
$$\big\|F_{A_j}\big\|_{C^{0,\alpha}_{-4,t}(\tilde T_{j,t})} \leq ct^{2}$$
and, hence,
  $$\big\|F_{A_j}\big\|_{C^{0,\alpha}_{-2,t}(\tilde T_{j,t})} \leq c$$
by \eqref{eq:wi} with $c>0$ independent of $t\in(0,T)$.  Now, putting
  everything together yields~\eqref{eq:e0e}.%

  Let $\iota_{j,t}\co T^3\times
\pi_{\smash{j,t}}^{-1}(B_{\smash{\zeta}}^4/G_j) \to Y$
  be as in \fullref{rmk:topy}.  Then $\iota_{\smash{j,t}}^*\frg_{E_t}$ is
  isomorphic to the pullback of $\frg_{E_j}$ to $T^3\times
  \pi_{\smash{j,t}}^{-1}(B_{\smash{\zeta}}^4/G_j)$.  This implies \eqref{eq:w2'} by
  naturality of Stiefel--Whitney classes.  To compute
  $p_1(\frg_{E_t})$ we use Chern--Weil theory to represent it as
  $p_1(\frg_{E_t})=
\smash{-\frac{1}{8\pi^2}}\tr (F_{\smash{\tilde A_t}}\wedge
  F_{\smash{\tilde A_t}})$.  We can write this as $p_1(\frg_{E_t})=\sum_{j}
  p_j$, where $p_j$ are compactly supported $4$--forms on $\smash{\tilde
  T_{j,t}}$.  Recalling the definition of $[S_j]$ in \eqref{eq:Sj} and
  considering the behaviour of Poincar\'e duality with respect to
  coverings we see that in order to prove \eqref{eq:p1'} we have to
  show 
  \begin{equation*}
    \iota_{j,t}^*p_j=k_j\, \PD\big[T^3\times\{x\}\big] \in
    H^4_c\big(T^3\times \pi_{j,t}^{-1}\big(B_\zeta^4/G_j\big),\R\big).
  \end{equation*}
  From our construction of $\tilde A_t$ it follows that the form
  $\iota_{\smash{j,t}}^*p_j$ is the pullback of a compactly supported
  $4$--form on $X_j$ which we can write as $\smash{-\frac{1}{8\pi^2}}\tr
  (F_{\smash[t]{\tilde A_j}}\wedge F_{\smash{\tilde A_j}})$ where $\tilde A_j=A_j+\alpha$
  and, by slight abuse of notation, $\alpha=(1-\chi_t)a_j$.
  Consequently, $\iota_{\smash{j,t}}^*p_j$ is a multiple of
  $\PD[T^3\times\{x\}]$.  To see that the multiplicity is
  precisely $k_j$ we use the Chern--Simons $3$--form (see
  Donaldson--Kronheimer~\cite[Equation~(2.1.17)]{Donaldson1990}) to write
  \begin{equation*}
    \tr\big(F_{\tilde A_j}\wedge F_{\tilde A_j}\big)-\tr\big(F_{A_j}\wedge
F_{A_j}\big)
    =\rd\tr\big(\alpha\wedge\rd_{A_j}\alpha + \tfrac13
    \alpha\wedge[\alpha\wedge\alpha]\big).
  \end{equation*}
  By \fullref{prop:decay} the $1$--form $\alpha$ decays
  sufficiently fast to conclude from Stokes' theorem that
  \begin{equation*}
    -\frac{1}{8\pi^2} \int_{X_j} \tr \big(F_{\tilde A_j}\wedge F_{\tilde
      A_j}\big) = -\frac{1}{8\pi^2} \int_{X_j} \tr\big(F_{A_j}\wedge
F_{A_j}\big)
    = \int_{X_j} \frac{1}{8\pi^2}\big|F_{A_j}\big|^2=k_j.
  \end{equation*}
  This completes the proof.
\end{proof}

\begin{remark}
  If we identify all $Y_t$ with one fixed $Y$, then the isomorphism
  type of the bundles $E_t$ does not depend on $t\in(0,T)$.  We can
  therefore think of them as one fixed $G$--bundle $E$ over $Y$.
\end{remark}

\section[A model operator on R^3 x ALE]
{A model operator on $\R^3\times \text{ALE}$}
\label{sec:model}

In order to prove \fullref{thm:a} we need to find
$\xi_t\in\Omega^0(Y_t,\frg_{E_t})$ and
$a_t\in\Omega^1(Y_t,\frg_{E_t})$ such that
\begin{equation}\label{eq:y}
  *_t\big(F_{\tilde A_t+a_t}\wedge\psi_t\big)+\rd_{\tilde A_t}\xi_t=0
\end{equation}
for $t\in(0,T')$ provided $T'\in(0,T]$ is sufficiently small.  Here
$*_t$ denotes the Hodge $*$--operator associated with $\phi_t$.
Equation \eqref{eq:y} together with the Coulomb gauge condition
$\rd_{\tilde A_t}^*a_t=0$ can be written as
\begin{equation}\label{eq:x}
  L_{t} \ua_t + Q_t(\ua_t)
   + *_t\big(F_{\tilde A_t}\wedge\psi_t\big) = 0.
\end{equation}
Here we use the notation $\ua_t:=(\xi_t,a_t)$, the linear operator
$L_t:=L_{\tilde A_t}$ is defined as in \eqref{eq:la} with
$\psi=\psi_t:=*_t\phi_t$ and $Q_t$ is defined by
\begin{equation}\label{eq:qt}
  Q_t(\ua)
  := \tfrac12 *_t([a\wedge a]\wedge\psi_t) + [a,\xi].
\end{equation}
The key to solving \eqref{eq:x} is a good understanding of the
linearisation $L_t$.  In this section, we study a model
for $L_t$ on $r_t^{-1}([0,\zeta))$.

Let $X$ be an ALE space, let $A$ be a $G$--bundle over $X$ and let $A$
be a finite energy ASD instanton on $E$.  Fix an orthonormal triple
$(\delta^1,\delta^2,\delta^3)$ of constant $1$--forms on $\R^3$ and
denote by $(\omega_1,\omega_2,\omega_3)$ the triple of K\"ahler forms
associated with $X$.  Consider $\R^3\times X$ as a $\rG_2$--manifold
as in \fullref{ex:t3x}.  Denote by $p_{\R^3}\co \R^3\times X\to
\R^3$ and $p_X\co \R^3\times X\to X$ the projection onto the first and
second factor, respectively.  Slightly abusing notation, we denote the
respective pullbacks of $E$ and $A$ to $\R^3\times X$ via $p_X$ by $E$
and $A$ as well.  As in \eqref{eq:la} we define
$L_A\co\Omega^0(\R^3\times X,\frg_E)\oplus\Omega^1(\R^3\times
X,\frg_E) \to \Omega^0(\R^3\times X,\frg_E)\oplus\Omega^1(\R^3\times
X,\frg_E)$ by
\begin{equation*}
L_A=\begin{pmatrix}
0 & \rd_A^* \\
\rd_A & *(\psi\wedge\rd_{A})
\end{pmatrix}
\end{equation*}
with $\psi$ as in \eqref{eq:psi}.

\begin{prop}\label{prop:lala}
  If we identify $p_{\R^3}^*T^*\R^3$ with $p_X^*\Lambda^+T^*X$ via
  $\delta^1\mapsto\omega_1$, $\delta^2\mapsto\omega_2$,
  $\delta^3\mapsto-\omega_3$ and accordingly
  \begin{equation*}
    \Omega^0(\R^3\times X,\frg_E)\oplus\Omega^1(\R^3\times X,\frg_E)
    = \Omega^0\big(\R^3{\times}X,p_X^*\big[(\R{\oplus}\Lambda^+T^*X{\oplus}
    T^*X){\otimes}\frg_E\big]\big),
  \end{equation*}
  then the operator $L_A$ can be written as $L_A=F+D_A$, where
  \begin{equation*}
    F(\xi,\omega,a) = \sum_{i=1}^3 \big(-\langle\del_i\omega,
    \omega_i\rangle,
     \del_i \xi \cdot \omega_i, I_i\del_i a\big) \quad\text{and}\quad 
    D_A=\begin{pmatrix}
      0 & \delta_A \\
      \delta_A^* & 0
    \end{pmatrix}.
  \end{equation*}
  Here
  $\delta_A\co\Omega^1(X,\frg_E)\to\Omega^0(X,\frg_E)\oplus\Omega^+(X,\frg_E)$
  denotes the linear operator defined in \eqref{eq:deltaA}.  Moreover,
  \begin{equation}\label{eq:lala}
    L_A^*L_A = \Delta_{\R^3}  +
    \begin{pmatrix}
      \delta_A\delta_A^* & \\
      & \delta_A^*\delta_A
    \end{pmatrix}
  \end{equation}
  where $\Delta_{\R^3}=-\sum_{i=1}^3 \del_i^2$ and $\del_i$ denotes
  taking the derivative of a section of
  $p_X^*[(\R\oplus\Lambda^+T^*X\oplus T^*X)\otimes\frg_E)]$ in the
  direction of the $i^{\text{th}}$ coordinate on $\R^3$.
\end{prop}

\begin{proof}
  It is a straight-forward computation to verify that $L_A=F+D_A$.  It
  is also easy to see that $F^*F=\Delta_{\R^3}$ and that
  $F^*D_A+D_A^*F=0$.  This immediately implies \eqref{eq:lala}.
\end{proof}

To understand the properties of $L_A$ we work with weighted
H\"older norms.  We define weight functions by
\begin{equation*}
  w(x) := 1 + |\pi(p_{X}(x))| \quad\text{and}\quad
  w(x,y) := \min\{w(x),w(y)\}.
\end{equation*}
Here $\pi\co X\to\C^2/G$ denotes the resolution map associated with
the ALE space $X$.  For a H\"older exponent $\alpha\in(0,1)$ and a
weight parameter $\beta\in\R$ we define
\begin{align*}
  [f]_{C^{0,\alpha}_{\beta}(U)}
  &:= \sup_{d(x,y) \leq w(x,y)}
        w(x,y)^{\alpha-\beta} \frac{|f(x)-f(y)|}{d(x,y)^\alpha}, \\[1ex]
  \|f\|_{L^{\infty}_{\beta}(U)}
  &:=\big\|w^{-\beta}f\big\|_{L^\infty(U)}, \\
\|f\|_{C^{k,\alpha}_{\beta}(U)}
  &:= \sum_{j=0}^k \big\|\nabla^j f\big\|_{L^{\infty}_{\beta-j}(U)}
                + \big[\nabla^j f\big]_{C^{0,\alpha}_{\beta-j}(U)}.
\end{align*}
Here $f$ is a section of a vector bundle over $U\subset\R^3\times X$
equipped with an inner product and a compatible connection.  We use
parallel transport to compare the values of $f$ at different points.
If $U$ is not specified, then we take $U=Y_t$.  We denote by
$C^{k,\alpha}_{\smash{\beta}}$ the subspace of elements $f$ of the Banach space
$C^{k,\alpha}$ with $\smash{\|f\|_{C^{k,\alpha}_\beta}}<\infty$ and equip it
with the norm $\smash{\|\cdot\|_{C^{k,\alpha}_\beta}}$.

Under the assumptions of \fullref{sec:approx} and with $\bg$
denoting compatible gluing data suppose that $X=X_j$ and that $A=A_j$.
Define $\tilde \iota_{j,t}\co\R^3\times
\pi_{j,t}^{-1}(B_\zeta^4/G_j)\to\tilde T_{j,t}$ by
\begin{equation*}
  \tilde \iota_{j,t}(x,y):=[(tx,y)].
\end{equation*}  
For a parameter $\beta\in\R$ and
$\ua=(\xi,a)\in\Omega^0(Y_t,\frg_{E_t})\oplus\Omega^1(Y_t,\frg_{E_t})$
we define
\begin{equation}\label{eq:sbt}
  s_{\beta,t} (\xi,a)(x,y)
  :=t^{\beta-1}\big(t (\tilde\iota_{j,t})^*\xi,(\tilde\iota_{j,t})^* a\big).
\end{equation}

\begin{prop}\label{prop:ltcmp}
  There is a constant $c>0$ such that for $t\in(0,T)$
  \begin{gather*}
    \tfrac1c \|\ua\|_{C^{k,\alpha}_{\beta,t}(\tilde{T}_{j,t})} \leq 
    \|s_{\beta,t}\ua\|_{C^{k,\alpha}_{\beta}(\R^3\times
      \pi_{j,t}^{-1}(B_\zeta^4/G_j))} \leq c
    \|\ua\|_{C^{k,\alpha}_{\beta,t}(\tilde{T}_{j,t})},
    \\[2pt]
    \|L_t \ua- s_{\beta-1,t}^{-1}L_{A_j} s_{\beta,t}\ua\|_{C^{0,\alpha}_{\beta-1,t}(\tilde T_{j,t})}
    \leq ct^{1/2}\|\ua\|_{C^{1,\alpha}_{\beta,t}(\tilde T_{j,t})}.
  \end{gather*}
\end{prop}

\begin{proof}
  The map $\tilde \iota_{j,t}$ pulls back the metric on $\tilde
  T_{j,t}$ associated with $\hat\phi_t$, that is
  $g_{\smash{\hat\phi_t}}=g_{\R^3}\oplus t^2g_{X_j}$, to $t^2(g_{\R^3}\oplus
  g_{X_j})$.  This implies the first estimate in view of
  \fullref{rmk:mcmp}.  The second estimate is immediate from
  the construction of $\smash{\tilde A_t}$ and \fullref{prop:joyce+}.
\end{proof}

\begin{prop}\label{prop:lk}
  Let $\beta\in(-3,0)$. Then $\ua\in C^{1,\alpha}_{\smash{\beta}}$ is in the
  kernel of $L_A\co C^{1,\alpha}_{\smash{\beta}}\to C^{0,\alpha}_{\smash{\beta-1}}$ if
  and only if it is given by the pullback of an element of the $L^2$
  kernel of $\delta_A$ to $\R^3\times X$.
\end{prop}

The proof of \fullref{prop:lk} relies on the following lemma
which we will prove in the \hyperlink{appendixa}{Appendix}.

\begin{definition}
  A Riemannian manifold $X$ is said to be of \emph{bounded geometry}
  if it is complete, its Riemann curvature tensor is bounded from
  above and its injectivity radius is bounded from below.  A vector
  bundle over $X$ is said to be of \emph{bounded geometry} if it has
  trivialisations over balls of a fixed radius such that the
  transitions functions and all of their derivatives are uniformly
  bounded.  We say that a complete oriented Riemannian manifold $X$
  has \emph{subexponential volume growth} if for each $x\in X$ the
  function $r\mapsto\vol(B_r(x))$ grows subexponentially, that is,
  $\vol(B_r(x))=o(\exp(c r))$ as $r\to \infty$ for every $c>0$.
\end{definition}

\begin{lemma}\label{lem:liouville}
  Let $E$ be a vector bundle of bounded geometry over a Riemannian
  manifold $X$ of bounded geometry and with subexponential volume
  growth, and suppose that $D\co C^\infty(X,E)\to{}C^\infty(X,E)$ is a
  uniformly elliptic operator of second order whose coefficients and
  their first derivatives are uniformly bounded, that is non-negative,
  such that $\langle Da,a\rangle\geq 0$ for all $a \in W^{2,2}(X,E)$, and formally
  self-adjoint.  If $a \in C^\infty(\R^n\times X,E)$ satisfies
  \begin{equation*}
    (\Delta_{\R^n}+D)a=0
  \end{equation*}
  and $\|a\|_{L^\infty}$ is finite, then $a$ is constant in the
  $\R^n$--direction, that is $a(x,y)=a(y)$.  Here, by slight abuse of
  notation, we denote the pullback of $E$ to $\R^n\times X$ by $E$ as
  well.
\end{lemma}

\begin{proof}[Proof of \fullref{prop:lk}]
  Suppose $\ua\in C^{1,\alpha}_{\smash{\beta}}$ satisfies $L_A\ua=0$.  Then
  $\ua$ is smooth by elliptic regularity and satisfies
  $\smash{L_A^*L_A\ua}=0$.  By \fullref{def:ale} and by
  \fullref{prop:Adecay} both $\R^3\times X$ and $\frg_E$ have
  bounded geometry.  Moreover, by \fullref{prop:lala},
  $L_A^*L_A=\Delta_{\R^3}+D_A^*D_A$ and $D_A^*D_A$ is non-negative,
  self-adjoint, uniformly elliptic of second order and its
  coefficients and their first derivatives are uniformly bounded as
  can be seen from \fullref{prop:Adecay}.  Therefore, we can
  apply \fullref{lem:liouville} to conclude that $\ua$ is invariant
  under translations in the $\R^3$--direction and, hence, by
  Propositions \ref{prop:decay} and \ref{prop:lala} must be the
  pullback of an element in the $L^2$ kernel of $\delta_A$.
\end{proof}

\begin{prop}\label{prop:mse}
  For $\beta\in\R$ there is a constant $c>0$ such that
  \begin{equation*}
    \|\ua\|_{C^{1,\alpha}_{\beta}}
    \leq c\Big(\|L_A\ua\|_{C^{0,\alpha}_{\beta-1}}
       + \|\ua\|_{L^{\infty}_{\beta}}\Big).
  \end{equation*}
\end{prop}

\begin{proof}
  This is a standard result; see \fullref{rmk:nw}.

  The desired estimate is local in the sense that is enough to prove
  estimates of the form
  \begin{equation*}
    \|\ua\|_{C^{1,\alpha}_{\beta}(U_i)} \leq c\Big(\|L_A
    \ua\|_{C^{0,\alpha}_{\beta-1}} + \|\ua\|_{L^{\infty}_{\beta}}\Big)
  \end{equation*}
  with $c>0$ independent of $i$, where $\{U_i\}$ is a suitable open
  cover of $\R^3\times X$.

  Fix $R>0$ suitably large and set $U_0:=\{(x,y)\in\R^3\times{}X :
  |\pi(x)|\leq R\}$. Then there clearly is a constant $c>0$ such that
  the above estimate holds for $U_i=U_0$.  Pick a sequence
  $(x_i,y_i)\in\R^3\times{}X$ such that $r_i:=|\pi(y_i)|\geq R$ and
  the balls $U_i:=B_{r_i/8}(x_i,y_i)$ cover the complement of $U_0$.
  On $U_i$, we have a Schauder estimate of the form
  \begin{multline*}
    \|\underline{a}\|_{L^\infty(U_i)} +
    r_i^{\alpha}[\underline{a}]_{C^{0,\alpha}(U_i)} + r_i\big\|\nabla_A
    \underline{a}\big\|_{L^\infty(U_i)} +
    r_i^{1+\alpha}\big[\nabla_A \underline{a}\big]_{C^{0,\alpha}(U_i)} \\
    \leq c\big(r_i\big\|L_A\underline a\big\|_{L^\infty(V_i)} +
      r_i^{1+\alpha}\big[L_A\underline{a}\big]_{C^{0,\alpha}(V_i)} +
      \|\underline{a}\|_{L^\infty(V_i)}\big)
  \end{multline*}
  where $V_i=B_{r_i/4}(x_i,y_i)$ and $\underline{a}=(\xi,a)$.  By
  arguing as in Propositions \ref{prop:joyce+} and \ref{prop:decay}
  one shows that the constant $c>0$ can be chosen to work for all $i$
  simultaneously.  Since on $V_i$ we have $\frac12 r_i \leq w \leq
  2r_i$, multiplying the above Schauder estimate by $r_i^{\smash{-\beta}}$
  yields the desired local estimate.
  \vadjust{\goodbreak}
\end{proof}

\section[Deforming to genuine G_2--instantons]
{Deforming to genuine $\rG\sb2$--instantons}
\label{sec:deform}

We continue with the assumptions of \fullref{sec:approx} and we
suppose that the connection $\tilde A_t$ on $G$--bundle $E_t$ over
$Y_t$ was constructed using \fullref{prop:gdc} from a choice
of compatible gluing data $\bg$.  In this section we will prove the
following result which will complete the proof of \fullref{thm:a}.

\begin{prop}\label{prop:a}
  Suppose that $\theta$ is acyclic and that each $A_j$ is
  infinitesimally rigid.  Then there are constants $T'\in(0,T]$ and
  $c>0$ as well as, for each $t\in(0,T')$, $\ua_t=(\xi_t,a_t)\in
  \Omega^0(Y_t,\frg_{E_t})\oplus \Omega^1(Y_t,\frg_{E_t})$ such that
  \begin{equation}\label{eq:i}
    *_t\big(F_{\tilde A_t+a_t}\wedge\psi_t\big)+\rd_{\tilde A_t}\xi_t=0
  \end{equation}
  and $\smash{\|\ua_t\|_{C^{1,\alpha}_{-1,t}}}\leq ct^{1/2}$.  Moreover, the
  $\rG_2$--instanton $A_t:=\tilde A_t+a_t$ is acyclic.
\end{prop}

As discussed in \fullref{sec:model} it is crucial to understand
the properties of the linear operator $L_{t}$.  The key to proving
\fullref{prop:a} is the following result.
\begin{prop}\label{prop:key}
  Given $\beta\in(-3,0)$ there are constants $T'\in(0,T]$ and $c>0$
  such that for $t\in(0,T')$ we have
  \begin{equation*}
    \|\ua\|_{C^{1,\alpha}_{\beta,t}} \leq c
    \|L_t\ua\|_{C^{0,\alpha}_{\beta-1,t}}.
  \end{equation*}
\end{prop}

Before we move on to prove this, let us quickly show how it is used to
establish \fullref{prop:a}.  Recall the following elementary
consequence of Banach's fixed point theorem.

\begin{lemma}[Donaldson--Kronheimer~{\cite[Lemma~7.2.23]{Donaldson1990}}]
\label{lem:CM}
  Let $X$ be a Banach space and let $T \co X\to X$ be a smooth map
  with $T(0)=0$.  Suppose there is a constant $c>0$ such that
  \begin{equation*}
    \|Tx-Ty\|\leq c(\|x\|+\|y\|)\|x-y\|.
  \end{equation*}
  Then if $y\in X$ satisfies $\|y\|\leq
  \frac{1}{10c}$, there exists a unique $x\in X$ with $\|x\|\leq
  \frac{1}{5c}$ solving
  \begin{equation*}
    x+Tx=y.
  \end{equation*}
  Moreover, this $x\in X$ satisfies $\|x\|\leq 2\|y\|$.
\end{lemma}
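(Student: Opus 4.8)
The plan is to solve $x + Tx = y$ by the Banach fixed-point theorem applied to the map $\Phi(x) := y - Tx$ on the closed ball $\bar B := \{x \in X : \|x\| \le \tfrac{1}{5c}\}$. First I would check that $\Phi$ maps $\bar B$ into itself: for $x \in \bar B$, using $T(0)=0$ and the hypothesis with $y$ replaced by $0$, we get $\|Tx\| = \|Tx - T(0)\| \le c\|x\|\cdot\|x\| \le c\cdot\tfrac{1}{5c}\cdot\tfrac{1}{5c} = \tfrac{1}{25c}$, hence $\|\Phi(x)\| \le \|y\| + \|Tx\| \le \tfrac{1}{10c} + \tfrac{1}{25c} < \tfrac{1}{5c}$, so $\Phi(x) \in \bar B$. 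Next I would verify that $\Phi$ is a contraction on $\bar B$: for $x_1, x_2 \in \bar B$ we have $\|\Phi(x_1) - \Phi(x_2)\| = \|Tx_1 - Tx_2\| \le c(\|x_1\| + \|x_2\|)\|x_1 - x_2\| \le c\cdot\tfrac{2}{5c}\cdot\|x_1 - x_2\| = \tfrac{2}{5}\|x_1 - x_2\|$, which is a contraction with constant $\tfrac25 < 1$.

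Since $\bar B$ is a complete metric space (being a closed subset of the Banach space $X$) and $\Phi$ is a contraction mapping it into itself, the contraction mapping principle yields a unique fixed point $x \in \bar B$, i.e. a unique solution of $x + Tx = y$ with $\|x\| \le \tfrac{1}{5c}$. For the final bound $\|x\| \le 2\|y\|$, I would use the equation directly: $\|x\| = \|y - Tx\| \le \|y\| + \|Tx\| \le \|y\| + c\|x\|^2$. Since $\|x\| \le \tfrac{1}{5c}$ gives $c\|x\| \le \tfrac15$, this reads $\|x\| \le \|y\| + \tfrac15\|x\|$, so $\tfrac45\|x\| \le \|y\|$ and thus $\|x\| \le \tfrac54\|y\| \le 2\|y\|$.

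This is an entirely elementary argument and there is no real obstacle; the only point requiring a moment's care is bookkeeping the constants so that the self-mapping property and the contraction estimate both hold on the same ball, which is why the hypothesis is stated with the slightly generous constant $\tfrac{1}{10c}$ rather than something sharper. (Smoothness of $T$ is not actually needed for this statement — Lipschitz-type control as in the hypothesis suffices — but it does no harm to assume it.)
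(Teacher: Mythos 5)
Your proof is correct and is the standard contraction mapping argument one would expect here; the paper in fact states the lemma without proof (``Recall the following elementary observation''), so there is no competing argument to compare against. Your bookkeeping of the constants is right: $\|Tx\|\le c\|x\|^2\le\tfrac{1}{25c}$ gives the self-mapping property, the Lipschitz bound $\tfrac{2}{5}<1$ gives the contraction, and the final estimate $\|x\|\le\tfrac{5}{4}\|y\|\le 2\|y\|$ follows cleanly. Your closing remark that smoothness of $T$ is not needed for this particular statement is also accurate (it is used elsewhere in the paper's argument, not here).
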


\begin{proof}[Proof of \fullref{prop:a} assuming
  \fullref{prop:key}]
  By \fullref{prop:key} the operator $L_t\co
  \smash{C^{1,\alpha}_{-1,t}}\to \smash{C^{0,\alpha}_{-2,t}}$ is
  injective and has closed range.  Therefore its cokernel is isomorphic
  to the kernel of the dual operator $L_t^*$.  By elliptic regularity
  any element in the kernel of $L_t^*$ is smooth and thus, since $L_t$
  is formally self-adjoint, an element in the kernel of $L_t$, which
  is trivial.  This shows that $L_t$ is invertible.  Denote its inverse
  by $R_t\co \smash{C^{0,\alpha}_{-2,t}}\to \smash{C^{1,\alpha}_{-1,t}}$.

  If we set $\ua_t:=R_t\ub_t$, then \eqref{eq:i} becomes
  \begin{equation}\label{eq:z}
    \ub_t+ Q_t(R_t\ub_t) = -*_t\big(F_{\tilde A_t}\wedge\psi_t\big).
  \end{equation}
  It follows from \fullref{prop:key} and \eqref{eq:m} that
  \begin{equation*}
    \big\|Q_t(R_t\ub_1)-Q_t(R_t\ub_2)\big\|_{C^{0,\alpha}_{-2,t}} 
    \leq c \Big(\|\ub_1\|_{C^{0,\alpha}_{-2,t}} +
      \|\ub_2\|_{C^{0,\alpha}_{-2,t}}\Big)
    \|\ub_1-\ub_2\|_{C^{0,\alpha}_{-2,t}}
  \end{equation*}
  with a constant $c>0$ independent of $t\in(0,T)$.  Since by
  \fullref{prop:ag}
  \begin{equation*}
    \big\|F_{\tilde A_t}\wedge\psi_t\big\|_{C^{0,\alpha}_{-2,t}}\leq ct^{1/2},
  \end{equation*}
  \fullref{lem:CM} provides us with, for each $t\in(0,T')$, a
  solution $\ub_t$ of \eqref{eq:z} satisfying
  $\smash{\|\ub_t\|_{C^{0,\alpha}_{-2,t}}}\leq ct^{1/2}$ provided $T'\in(0,T]$
  was chosen sufficiently small.  Then
$$\ua_t=(\xi_t,a_t)=R_t\ub_t\in C^{1,\alpha}_{-1,t}$$
is the desired solution of \eqref{eq:i} and
  satisfies $\smash{\|\ua_t\|_{C^{1,\alpha}_{-1,t}}}\leq ct^{1/2}$.

  It follows from elliptic regularity that $a_t$ and thus $A_t:=\tilde
  A_t + a_t$ is smooth.  To see that $A_t$ is acyclic, that is,
  $L_{A_t}$ is injective, note that
  $\|R_tL_{\smash{A_t}}-\id\|_{\smash{C^{1,\alpha}_{-1,t}}}\leq ct^{1/2}$ and thus
  $L_{A_t}$ is invertible for $t\in(0,T')$ provided $T'\in(0,T]$ was chosen
  sufficiently small.
\end{proof}

Before embarking on the proof of \fullref{prop:key}, it will
be helpful to make a few observations.  On $Y_t\setminus\tilde T_{t}$
the operators $L_t$ and $L_{\theta}$ agree.  For fixed $\epsilon>0$,
the norms
$\|\cdot\|_{\smash{C^{k,\alpha}_{\beta,t}(r_t^{-1}[\epsilon,\infty))}}$ are
uniformly equivalent to the corresponding unweighted H\"older norms.
Moreover, the restriction of $L_t$ to $r_t^{-1}[\epsilon,\infty)$
becomes arbitrarily close to $L_\theta$ restricted to $\{x\in
Y_0:d(x,S)>\epsilon\}$ as $t$ goes to zero.  These observations and
standard Schauder estimates combined with Propositions
\ref{prop:ltcmp} and \ref{prop:mse} yield the following Schauder
estimate.

\begin{prop}\label{prop:se}
  Given $\beta\in\R$ there is a constant $c>0$ such that for all
  $t\in(0,T)$ we have
  \begin{equation*}
    \|\ua\|_{C^{1,\alpha}_{\beta,t}}
    \leq c \big(\|L_t\ua\|_{C^{0,\alpha}_{\beta-1,t}} +
    \|\ua\|_{L^{\infty}_{\beta,t}}\big).
  \end{equation*}
\end{prop}

This reduces the proof of \fullref{prop:key} to the following
statement.

\begin{prop}\label{prop:2ndterm}
  Given $\beta\in(-3,0)$ there are constants $T'\in(0,T)$ and $c>0$
  such that for all $t\in(0,T')$ the following holds:
  \begin{equation*}
    \|\ua\|_{L^\infty_{\beta,t}}
    \leq c\|L_t\ua\|_{C^{0,\alpha}_{\beta-1,t}}.
  \end{equation*}
\end{prop}

\begin{proof}
  Suppose not.  Then there exists a sequence $(\ua_i)$ and a
  null-sequence $(t_i)$ such that
  \begin{equation*}
    \|\ua_i\|_{L^{\infty}_{\beta,t_i}}=1
    \quad\text{and}\quad
     \|L_{t_i}\ua_i\|_{C^{0,\alpha}_{\beta-1,t_i}} \leq \frac1i.
  \end{equation*}
  Hence, by \fullref{prop:se}, we have
  \begin{equation}\label{eq:hb}
    \|\ua_i\|_{C^{1,\alpha}_{\beta,t_i}}
    \leq 2c.
  \end{equation}
  Pick $x_i \in Y_{t_i}$ such that
  \begin{equation*}
    w_{t_i}(x_i)^{-\beta}|\ua_i(x_i)|=1.
  \end{equation*}
  After passing to a subsequence we can assume that one of the
  following three cases occurs.  We will rule out all of them, thus
  proving the proposition.

  \setcounter{case}{0}
  \begin{case}
    The sequence $(x_i)$ accumulates on the regular part of $Y_0$: $\lim
    r_{t_i}(x_i)>0$.
  \end{case}

  Let $K$ be a compact subset of $Y_0\setminus S$.  We can view $K$ as
  a subset of $Y_t$.  As $t$ goes to zero, the metric on $K$ induced
  from the metric on $Y_t$ converges to the metric on $Y_0$, similarly
  we can identify $E_0|_K$ with $E_t|_K$ and via this identification
  $\tilde A_t$ converges to $\theta$ on $K$.  By \eqref{eq:hb} the
  sequence $(\ua_i|_K)$ is uniformly bounded in $C^{1,\alpha}$.  We
  can thus extract a convergent subsequence using Arzel\`a--Ascoli.
  Using a diagonal sequence argument over a sequence of compact sets
  $(K_i)$ exhausting $Y_0\setminus S$ we can pass to a further
  subsequence which converges in $\smash{C^{1,\alpha/2}_\loc}$ to a limit $\ua
  \in \Omega^0(Y_0\setminus S,\frg_{E_0})\oplus\Omega^1(Y_0\setminus
  S,\frg_{E_0})$. This limit satisfies
  \begin{equation}\label{eq:bu1}
    |\ua|< c \cdot d(\,\cdot\,,S)^{\beta}
  \end{equation}
  as well as
  \begin{equation*}
    L_\theta\ua=0.
  \end{equation*}
  Since $\beta>-3$, it follows from \eqref{eq:bu1} that $\ua$
  satisfies $L_\theta\ua=0$ in the sense of distributions on all of
  $Y_0$ and, therefore, is smooth by elliptic regularity.  Because
  $\theta$ is assumed to be acyclic, $\ua$ must be zero.
  However, by passing to a further subsequence we can arrange that
  $(x_i)$ converges to some point $x \in Y_0\setminus S$. At this
  point we have $|\ua|(x)=d(x,S)^\beta \neq 0$.  This is a
  contradiction.

  \begin{case}
    The sequence $(x_i)$ accumulates on one of the ALE spaces: $\lim
    r_{t_i}(x_i)/t_i<\infty$.
  \end{case}

  There is no loss in assuming that each $x_i$ lies in $\tilde
  T_{j,t_i}$ for some fixed $j$.  With $s_{\beta,t_i}$ as in
  \eqref{eq:sbt} we define $\tilde\ua_i:=s_{\beta,t_i}\ua_i$ and
  denote by $\tilde x_i$ a lift of $x_i$ to $\R^3\times
  \pi_{j,t}^{-1}(B_\zeta^4/G_j)$.  This rescaled sequence satisfies,
  in the notation of \fullref{sec:model},
    $$\left\|\tilde \ua_i\right\|_{C^{1,\alpha}_{\beta}}\leq 4c \quad\text{and}\quad
    (1+|\pi_j(\tilde x_i)|)^{-\beta}
    |\tilde\ua(\tilde x_i)|
    \geq \tfrac12$$
  as well as
\begin{equation}
    \label{eq:lai}
    \|L_{A_j}\tilde\ua_i\|_{C^{0,\alpha}_{\beta-1}} \leq
    2/i.
\end{equation}
  Arguing as in the previous case, we can extract a subsequence of
  $(\tilde\ua_i)$ which converges to a limit $\tilde \ua \in
  C^{1,\alpha/2}_{\beta}$ in $C^{1,\alpha/2}_\loc$ on $\R^3\times X_j$.
  It follows from \eqref{eq:lai} that $\tilde\ua$ satisfies
  \begin{equation*}
    L_{A_j}\tilde\ua=0.
  \end{equation*}
  By \fullref{prop:lk}, $\tilde\ua$ must be zero since
  $\beta\in(-3,0)$ and $A_j$ is infinitesimally rigid.  However, by
  translation we can arrange that the $\R^3$--component of $\tilde
  x_i$ is zero and thus we can view $\tilde x_i$ as a point in $X_j$.
  Then the condition $\lim d_{t_i}(x_i)/t_i<\infty$ translates to
  $\lim |\pi_j(\tilde x_i)|<\infty$.  Therefore, we can assume without
  loss of generality that $\tilde x_i$ converges to some point $\tilde
  x\in X_j$.  But then $|\tilde{\ua}(\tilde{x})| \geq
  \frac12(1+|\pi_j(\tilde x)|)^\beta > 0$, which contradicts
  $\tilde\ua=0$.

  \begin{case}
    The sequence $(x_i)$ accumulates on one of the necks: $\lim
    r_{t_i}(x_i)=0$ and $\lim r_{t_i}(x_i)/t_i=\infty$.
  \end{case}

  As in the previous case, we rescale to obtain $(\tilde \ua_i)$ and
  $(\tilde x_i)$, and we arrange it so that the $\R^3$--component of
  $\tilde x_i$ is zero.  Since $\lim d_{t_i}(x_i)/t_i=\infty$, we have
  $\lim |\pi_j(\tilde x_i)|=\infty$.  Fix a sequence $(R_i)$ tending
  to infinity such that $\epsilon_i:=R_i/|\pi_j(\tilde x_i)|$ goes to
  zero.  Using $\pi_j\co X\to \C^2/G$, we can think of the sets
  $\R^3\times(\C^2\setminus B_{R_i}^4)/G_j$ as subsets of
  $\R^3\times X_j$.  Restricting to these sets and rescaling
  everything by $1/|\pi_j(\tilde x_i)|$ we obtain, without changing
  notation, $\tilde\ua_i \in \Omega^0\(\R^3\times (\C^2\setminus
  B_{\epsilon_i}^4)/G_j\) \oplus \Omega^1\(\R^3\times (\C^2\setminus
  B_{\epsilon_i}^4)/G_j\)$ and $\tilde{x}_i\in\C^2\setminus
  B_{\epsilon_i}^4$ satisfying
    $$\|\tilde\ua_i\|_{C^{1,\alpha}_{\beta}}\leq 8c \quad\text{and}\quad
    |\tilde{x}_j|^{-\beta}|\tilde\ua_i(\tilde x_i)|
    \geq \tfrac14$$
  as well as
    $$\|L\tilde\ua_i\|_{C^{0,\alpha}_{\beta-1}} \leq 4/i.$$
  Here the norms $\|\cdot\|_{\smash{C^{k,\alpha}_{\beta}}}$ are defined like
  those in \fullref{sec:model} except with the weight function
  now defined by $w(x,y):=|y|$ for $(x,y)\in\R^3\times \C^2/G_j$.  The
  operator $L$ is defined by
  \begin{equation*}
    L(\xi,a):=(\rd^*a, \rd \xi+*(\psi_0\wedge\rd a))
  \end{equation*}
  with
  $\psi_0:=\frac12 \omega_1\wedge\omega_1
       +\delta^2\wedge\delta^3\wedge\omega_1
       +\delta^3\wedge\delta^1\wedge\omega_2
       -\delta^1\wedge\delta^2\wedge\omega_3$
  and
  $\omega_i\in\Omega^2(\C^2)$ as in \fullref{sec:kummer}.

  As before, we can extract a subsequence converging in
  $C^{1,\alpha/2}_\loc$ to a limit
$$\tilde\ua\in \Omega^0(\R^3\times
  \big(\C^2\setminus\{0\}\big)/G_j) \oplus \Omega^1(\R^3\times
  \big(\C^2\setminus\{0\}\big)/G_j)$$ satisfying
  \begin{equation}\label{eq:bu2}
    |\tilde\ua|< c w^{\beta}
  \end{equation}
  as well as
  \begin{equation*}
    L\tilde\ua=0.
  \end{equation*}
  Since $\beta>-3$, it follows from \eqref{eq:bu2} that $\tilde \ua$
  satisfies $L\tilde\ua=0$ in the sense of distributions on all of
  $\R^3\times \C^2/G_j$ and therefore $\tilde\ua$ is smooth by
  elliptic regularity.  It also follows from \eqref{eq:bu2} that both
  $\tilde\ua$ and $\nabla\tilde\ua$ are uniformly bounded: This is
  clear outside a tubular neighbourhood of $\R^3\times\{0\}$.  If
  $B_1$ is a ball of radius one centred at some point in
  $\R^3\times\{0\}$, then \eqref{eq:bu2} gives a uniform bound on
  $\|\tilde \ua\|_{L^p(B_1)}$, for some fixed $p\in(1,\infty)$.  Using
  elliptic estimates this yields a uniform $W^{k,p}$ estimate on the
  ball of radius one-half; hence, using Sobolev embedding, uniform
  bounds on $\tilde\ua$ and $\nabla\tilde\ua$.  Because
  $L^*L=\Delta_{\R^3}+\Delta_{\C^2}$, if follows from \fullref{lem:liouville} that $\tilde\ua$ is invariant under translations
  in the $\R^3$--direction.  Thus we can think of the components of
  $\tilde\ua$ as harmonic functions on $\C^2$.  Since $\beta<0$, they
  decay to zero at infinity and thus vanish identically.  However, we
  know that $|\tilde x_i|=1$ and thus a subsequence of $(\tilde x_i)$
  converges to a point $\tilde x \in\C^2/G_j$ with $|\tilde x|=1$ at
  which $|\tilde\ua|(\tilde x)\geq \frac14$, contradicting
  $\tilde\ua=0$.
\end{proof}

\section{Examples with $G=\SO(3)$}
\label{sec:ex}

We will now explain how to use \fullref{thm:a} to construct a few
concrete examples of $\rG_2$--instantons on the $\rG_2$--manifolds from
\cite[Sections~12.3 and~12.4]{Joyce2000}.  The flat $\rG_2$--structure
$\phi_0$ on $T^7$ given by \eqref{eq:phi0} is preserved by
$\alpha,\beta,\gamma\in\Diff(T^7)$ defined by
\begin{align*}
  \alpha(x_1,\ldots,x_7)&:=
  \big(x_1,x_2,x_3,-x_4,-x_5,-x_6,-x_7\big), \\
  \beta(x_1,\ldots,x_7)&:=
  \big(x_1,-x_2,-x_3,x_4,x_5,\tfrac12-x_6,-x_7\big),  \\
\gamma(x_1,\ldots,x_7)&:=
  \big(-x_1,x_2,-x_3,x_4,-x_5,x_6,\tfrac12-x_7\big).
\end{align*}
It is easy to see that $\Gamma:=\langle\alpha,\beta,\gamma\rangle\iso\Z_2^3$.

To understand the singular set $S$ of $T^7/\Gamma$ note that the only
elements of $\Gamma$ having fixed points are $\alpha$, $\beta$ and
$\gamma$.  The fixed point set of each of these elements consists of
$16$ copies of $T^3$.  The group $\langle\beta,\gamma\rangle$ acts freely on the
set of $T^3$ fixed by $\alpha$ and $\langle\alpha,\gamma\rangle$ acts freely on
the set of $T^3$ fixed by $\beta$, while
$\alpha\beta\in\langle\alpha,\beta\rangle$ acts trivially on the set of $T^3$
fixed by $\gamma$.  It follows that $S$ consists of $8$ copies of
$T^3$ coming from the fixed points of $\alpha$ and $\beta$ and $8$
copies of $T^3/\Z_2$.  Near the copies of $T^3$ the singular set is
modelled on $T^3\times \C^2/\Z_2$ while near the copies of $T^3/\Z_2$
it is modelled on $(T^3\times \C^2/\Z_2)/\Z_2$ where the action of
$\Z_2$ on $T^3\times \C^2/\Z_2$ is given by
\begin{equation*}
  (x_1,x_2,x_3,\pm(z_1,z_2))\mapsto\big(x_1,x_2,x_3+\tfrac12,\pm(z_1,-z_2)\big).
\end{equation*}
The 8 copies of $T^3$ can be desingularised by any choice of 8 ALE
spaces asymptotic to $\C^2/\Z_2$.  To desingularise the copies of
$T^3/\Z_2$ we need to chose ALE spaces which admit an isometric action
of $\Z_2$ asymptotic to the action $\Z_2$ on $\C^2/\Z_2$ given by
$\pm(z_1,z_2)\mapsto\pm(z_1,-z_2)$.  Two possible choices are the
resolution of $\C^2/\Z_2$ or a smoothing of $\C^2/\Z_2$.
See Joyce~\cite[pages~313--314]{Joyce2000} for details.

We construct our examples on desingularisations of quotients of
$T^7/\Gamma$.  To this end we define
$\sigma_1,\sigma_2,\sigma_3\in\Diff(T^7)$ by
\begin{align*}
  \sigma_1(x_1,\ldots,x_7)&:=
  \big(x_1,x_2,\tfrac12+x_3,\tfrac12+x_4,\tfrac12+x_5,x_6,x_7\big),\\
  \sigma_2(x_1,\ldots,x_7)&:=
  \big(x_1,\tfrac12+x_2,x_3,\tfrac12+x_4,x_5,x_6,x_7\big), \\
  \sigma_3(x_1,\ldots,x_7)&:=
  \big(\tfrac12+x_1,x_2,x_3,x_4,\tfrac12+x_5,\tfrac12+x_6,x_7\big).
\end{align*}
The elements $\sigma_j$ commute with all elements of $\Gamma$ and thus
act on $T^7/\Gamma$.  Moreover, this action is free.

\begin{example}
  Let $A:=\langle\sigma_2,\sigma_3\rangle$.  By analysing how $A$ acts on the
  singular set of $T^7/\Gamma$ one can see that the singular set of
  $Y_0:=T^7/(\Gamma \times A)$ consists of one copy of $T^3$, denoted
  by $S_1$, and $6$ copies of $T^3/\Z_2$, denoted by $S_2, \ldots,
  S_7$.  $S_1$ has a neighbourhood modelled on $T^3\times \C^2/\Z_2$,
  while $S_2,\ldots,S_6$ have neighbourhoods modelled on
  $(T^3\times\C^2/\Z_2)/\Z_2$ where $\Z_2$ acts by
  $\pm(z_1,z_2)\mapsto\pm(z_1,-z_2)$ on $\C^2/\Z_2$.  As before, $S_1$
  can be desingularised by any choice of an ALE space asymptotic to
  $\C^2/\Z_2$.  $S_2,\ldots,S_6$ can be desingularised by the
  resolution of $\C^2/\Z_2$ or a smoothing of $\C^2/\Z_2$.

  To compute the orbifold fundamental group $\pi_1(Y_0)$, note that it
  is isomorphic to the fundamental group $\pi_1(Y_0\setminus S)$ of
  the regular part of $Y_0$.  Denote by $p\co \R^7\to Y_0$ the canonical
  projection.  Then $p\co p^{-1}(Y_0\setminus S)\to Y_0\setminus S$ is
  a universal cover.  Up to conjugation we can therefore identify
  $\pi_1(Y_0)$ with the group of deck transformations
  \begin{equation*}
    \pi_1(Y_0)=\<\alpha,\beta,\gamma,\sigma_2,\sigma_3,
                \tau_1,\ldots,\tau_7\>\subset\Aff(7)=\GL(7)\ltimes \R^7.
  \end{equation*}
  Here we think of $\alpha,\beta,\gamma,\sigma_2,\sigma_3$ as elements
  of $\Aff(7)$ defined by the formulae above and $\tau_i$ translates
  the $i^{\rm th}$ coordinate of $\R^7$ by one.  The group $\pi_1(Y_0)$ is
  a non-split extension
  \begin{equation*}
    0\to\Z^7\to\pi_1(Y_0)\to\Gamma\times{}A\to 0.
  \end{equation*}
  To work out the orbifold fundamental group $\pi_1(T_j)$ of $T_j$,
  again up to conjugation, one simply has to understand the subgroup
  of deck transformations preserving a fixed component of
  $p^{-1}(T_j)\subset{}p^{-1}(Y_0\setminus S)$.  In this way one can
  compute
  \begin{align*}
    \pi_1(T_1)&=\<\alpha,\tau_1,\tau_2,\tau_3\>, \\
    \pi_1(T_2)&=\<\beta,\sigma_3\alpha,\tau_1,\tau_4,\tau_5\>, &
    \pi_1(T_3)&=\<\tau_3\beta,\sigma_3\alpha,\tau_1,\tau_4,\tau_5\>, \\
    \pi_1(T_4)&=\<\gamma,\alpha\beta,\sigma_2,\tau_4,\tau_6\>, &
    \pi_1(T_5)&=\<\tau_3\gamma,\tau_3\alpha\beta,\sigma_2,\tau_4,\tau_6\>, \\
    \pi_1(T_6)&=\<\tau_5\gamma,\tau_5\alpha\beta,\sigma_2,\tau_4,\tau_6\>, &
    \pi_1(T_7)&=\<\tau_3\tau_5\gamma,\tau_3\tau_5\alpha\beta,\sigma_2,\tau_4,\tau_6\>.
  \end{align*}
  Here $\tau_2$ does not appear explicitly in $\pi_1(T_j)$, for
  $j=4,\ldots,7$, because $\sigma_2^2=\tau_2\tau_4$.

  Denote by $V:=\<a,b,c \,|\, a^2=b^2=c^2=1, ab=c\>\iso\Z_2^2$ the Klein
  four-group. $V$ can be thought of as a subgroup of $\SO(3)$:
  $a=\diag(1,-1,-1)$, $b=\diag(-1,1,-1)$ and $c=\diag(-1,-1,1)$.  We
  define $\rho\co \pi_1(Y_0) \to V \subset \SO(3)$ by
  \begin{alignat*}{3}
    \beta,\gamma,\tau_1,\ldots,\tau_7 &\mapsto 1, &\qquad
    \alpha &\mapsto a, \\
    \sigma_2 &\mapsto a, & \qquad
    \sigma_3 &\mapsto b.
  \end{alignat*}
  To see that the flat connection $\theta$ induced by $\rho$ is
  acyclic we use the following observation.

  \begin{prop}\label{prop:fr}
    A flat connection $\theta$ on a $G$--bundle $E_0$ over a flat
    $\rG_2$--orbifold $Y_0$ corresponding to a representation
    $\rho\co\pi_1(Y_0)\to\rG$ is acyclic if and
    only if the induced representation of $\pi_1(Y_0)$ on
    $\frg\oplus(\R^7\otimes\frg)$ has no non-zero fixed vectors.
  \end{prop}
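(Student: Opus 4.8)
The plan is to reduce the regularity of $\theta$ to the vanishing of $\ker L_\theta$, to identify $\ker L_\theta$ with $\cH^0(Y_0,\frg_{E_0})\oplus\cH^1(Y_0,\frg_{E_0})$, and finally to compute these harmonic spaces via the Bochner technique, using crucially that $Y_0$ \emph{and} $\theta$ are flat. Since $L_\theta$ is a formally self-adjoint elliptic operator on the compact orbifold $Y_0$ (everything below may equivalently be run $\Gamma$-equivariantly on $T^7$), it is an isomorphism if and only if it is injective, so it is enough to compute $\ker L_\theta$.

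First I would show that $(\xi,a)\in\ker L_\theta$ forces $\rd_\theta\xi=0$ and $\rd_\theta a=0$. If $L_\theta(\xi,a)=0$, then $\rd_\theta^*a=0$ and $\rd_\theta\xi+*(\psi\wedge\rd_\theta a)=0$. Applying $\rd_\theta^*$ to the second equation and using $\rd\psi=0$ together with $F_\theta=0$ (so $\rd_\theta^2=0$) gives $\rd_\theta^*\bigl(*(\psi\wedge\rd_\theta a)\bigr)=\pm*\rd_\theta(\psi\wedge\rd_\theta a)=0$; hence $\rd_\theta^*\rd_\theta\xi=0$ and, by integration by parts on the compact $Y_0$, $\rd_\theta\xi=0$. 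The second equation then reads $\psi\wedge\rd_\theta a=0$, which by $\rG_2$ linear algebra is equivalent to $\phi\wedge\rd_\theta a=-*(\rd_\theta a)$. Writing $b:=\rd_\theta a$, applying $\rd_\theta$ to $\phi\wedge b=-*b$ and using $\rd\phi=0$ and $\rd_\theta b=\rd_\theta^2a=0$, one gets $\rd_\theta(*b)=0$, so $b$ is $\theta$-harmonic as well as $\rd_\theta$-exact, whence $b=0$. Thus $\ker L_\theta=\{(\xi,a):\rd_\theta\xi=0,\ \rd_\theta^*a=0,\ \rd_\theta a=0\}=\cH^0(Y_0,\frg_{E_0})\oplus\cH^1(Y_0,\frg_{E_0})$.

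Next I would compute these harmonic spaces. Since $Y_0$ is a flat orbifold and $\theta$ is a flat connection, the Weitzenböck formula for the Hodge Laplacian $\Delta_\theta$ on $\frg_{E_0}$-valued forms has no curvature term, so $\Delta_\theta=\nabla_\theta^*\nabla_\theta$ and every $\theta$-harmonic form is $\nabla_\theta$-parallel. By the holonomy principle, parallel $\frg_{E_0}$-valued $k$-forms correspond to the vectors of $\Lambda^k(\R^7)^*\otimes\frg$ fixed by the holonomy group, which is the image of $\pi_1(Y_0)$ acting through its rotational part on $\R^7$ and through $\mathrm{Ad}\circ\rho$ on $\frg$. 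Hence $\cH^0(Y_0,\frg_{E_0})=\frg^{\rho(\pi_1(Y_0))}$, and, identifying $(\R^7)^*$ with $\R^7$ via the $\pi_1(Y_0)$-invariant metric, $\cH^1(Y_0,\frg_{E_0})\cong(\R^7\otimes\frg)^{\rho(\pi_1(Y_0))}$. Combining this with the previous paragraph gives $\ker L_\theta\cong\bigl(\frg\oplus(\R^7\otimes\frg)\bigr)^{\rho(\pi_1(Y_0))}$, so $\theta$ is regular precisely when this invariant space is trivial.

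The step I expect to require the most care is the characterisation of $\ker L_\theta$: one must handle the $\rG_2$ linear-algebra identities relating $\psi\wedge b$, $*(\psi\wedge b)$ and $\phi\wedge b+*b$ for $b\in\Omega^2(Y_0,\frg_{E_0})$, and check that the integration-by-parts arguments remain valid on the orbifold $Y_0$ (or, to be safe, run them on $T^7$ and take $\Gamma$-invariants). Once $\ker L_\theta$ is pinned down, identifying the harmonic spaces is a routine application of the Bochner technique and the holonomy principle on flat spaces.
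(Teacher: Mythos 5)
Your proof is correct, and it follows the same underlying principle as the paper (flatness kills all curvature terms in a Weitzenb\"ock identity, so harmonic sections are parallel, and parallel sections are holonomy-invariants), but it takes a slightly longer route. The paper applies Weitzenb\"ock directly to the deformation operator: on a flat orbifold with flat connection, $L_\theta^*L_\theta=\nabla_\theta^*\nabla_\theta$, so $\ker L_\theta=\ker\nabla_\theta$ immediately, and the result follows by the holonomy principle. You instead first identify $\ker L_\theta$ with $\cH^0(Y_0,\frg_{E_0})\oplus\cH^1(Y_0,\frg_{E_0})$ using the $\rG_2$ linear algebra ($\psi\wedge b=0$ for a $2$--form $b$ iff $b\in\Lambda^2_{14}$ iff $\phi\wedge b=-*b$) together with $\rd\phi=\rd\psi=0$, $F_\theta=0$ and Hodge theory on the compact orbifold, and only then invoke the (more standard) Weitzenb\"ock formula for the Hodge Laplacian on $\frg_{E_0}$--valued forms. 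Your intermediate step is a clean reproof of the general fact that $\ker L_\theta=\cH^0\oplus\cH^1$ for a flat $\rG_2$--instanton, which is interesting in its own right but logically unnecessary here; the paper's approach is shorter because it computes the Weitzenb\"ock remainder of $L_\theta^2$ once, rather than decomposing $L_\theta$ first and treating $\Delta_\theta$ afterwards. Both versions of Weitzenb\"ock are being asserted without full computation, so there is no real difference in rigour; yours has the modest advantage that the Weitzenb\"ock formula for the Hodge Laplacian is textbook, whereas the one for $L_\theta^2$ requires knowing the identity $\bigl(*(\psi\wedge\rd_\theta\,\cdot\,)\bigr)^2+\rd_\theta\rd_\theta^*=\nabla_\theta^*\nabla_\theta$ (modulo curvature) on $1$--forms.
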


  \begin{proof}
    Since $Y_0$ is flat as a Riemannian orbifold and $\theta$ is a
    flat connection
    \begin{equation*}
      L_\theta^*L_\theta=\nabla_\theta^*\nabla_\theta.
    \end{equation*}
    Therefore, all elements in the kernel of $L_\theta$ are actually
    parallel sections of the bundle
    $\frg_{E_0}\oplus(T^*Y_0\otimes\frg_{E_0})$ and these are in
    one-to-one correspondence with fixed vectors of the representation
    of $\pi_1(Y_0)$ on $\frg\oplus(\R^7\otimes\frg)$.
  \end{proof}

  The elements $\sigma_2$ and $\sigma_3$ act trivially on $\R^7$ and
  their action on $\so(3)$ has no common non-zero fixed vectors.
  Therefore the action of $\pi_1(Y_0)$ on
  $\frg\oplus(\R^7\otimes\frg)$ has no non-zero fixed vector and
  thus $\theta$ is acyclic.

  The monodromy representation $\mu_j|_{G_j}\co G_j=\Z_2\to\SO(3)$
  associated with the flat connection $\theta$ is non-trivial only for
  $j=1$.  Let $A_1:=A_{0,1}$ be the infinitesimally rigid ASD
  instanton on $E_1:=E_{0,1}$ given in \fullref{prop:rigid}.
  For $j=2,\ldots,6$ we choose $A_j$ to be the product connection on
  the trivial $\SO(3)$--bundle $E_j$.  We take $m_1$ and
  $\tilde\rho_1$ to be trivial.  For $j=2,\ldots,6$ we can choose
  $m_j$ and $\tilde\rho_j$ accordingly to satisfy the compatibility
  conditions.  Thus we obtain examples of $\rG_2$--instantons on each
  of the desingularisations of $Y_0$ by appealing to
  \fullref{thm:a}.

  Note that any choice of resolution data for $T^7/(\Gamma \times
  A)$ lifts to an $A$--invariant choice of resolution data for
  $T^7/\Gamma$.  We can then carry out Joyce's generalised Kummer
  construction in a $A$--invariant way and lift up the
  $\rG_2$--instanton constructed above.  However, we could not have
  constructed this $\rG_2$--instanton directly using
  \fullref{thm:a}, since the lift of $\theta$ to $T^7/\Gamma$ is
  not acyclic.
\end{example}

\begin{example}
  Here is a more complicated example.  Let $Y_0:=T^7/(\Gamma \times
  A)$ be as before.  Define $\rho\co \pi_1(Y_0)\to{}V\subset\SO(3)$ by
  \begin{alignat*}{5}
    \gamma,\tau_1,\ldots,\tau_7 &\mapsto 1, &\qquad
    \alpha &\mapsto a, &\qquad
    \beta &\mapsto b, \\
    \sigma_2 &\mapsto b, & \qquad
    \sigma_3 &\mapsto a. &&
  \end{alignat*}
  Again, the resulting flat connection $\theta$ is acyclic.  For
  $j=1,2,3$ let $A_j:=A_{0,1}$ be the rigid ASD instanton on
  $E_j:=E_{0,1}$.  By adapting the framings of $E_2$ and $E_3$, we can
  arrange that $A_2$ and $A_3$ are asymptotic at infinity to the flat
  connection with monodromy given by $b\in V$. For $j=4,\ldots,7$ let
  $A_j$ be the product connection on the trivial bundle $E_j$.  To be
  able to extend this to compatible gluing data we need a lift
  $\tilde\rho_j$ of the action of $\Z_2$ on $X_j$ to $E_j$ preserving
  $A_j$ and acting trivially on the framing at infinity for $j=2,3$.
  If $X_j$ is a smoothing of $\C^2/\Z_2$, then the $\Z_2$ action on
  $X_j$ does lift to $E_j$ preserving $A_j$.  However, the action does
  not lift if $X_j$ is the resolution of $\C^2/\Z_2$.  The reason for
  this is that in the first case the action of $\Z_2$ on $H^2(X,\R)$
  is given by the identity, while in the second case it acts via
  multiplication by~$-1$; see Joyce~\cite[pages~313--314]{Joyce2000}.  Thus
  we can only find compatible gluing data if we resolve both $S_2$ and
  $S_3$ using a smoothing of $\C^2/\Z_2$.

  Here is a small modification of this example.  Define
  $\rho\co\pi_1(Y_0)\to{}V\subset\SO(3)$ by
  \begin{alignat*}{5}
    \gamma,\tau_1,\ldots,\tau_7 &\mapsto 1, &\qquad
    \alpha &\mapsto a, &\qquad
    \beta &\mapsto b, \\
    \sigma_2 &\mapsto b, & \qquad
    \sigma_3 &\mapsto c.&&
  \end{alignat*}
  To find compatible gluing data, one simply has to compose
  $\tilde\rho_j$ as above with multiplication by $b \in
  \sG(E_j)$, for $j=2,3$.
\end{example}

\begin{example}
  Let $B:=\<\sigma_1,\sigma_2,\sigma_3\>$ and $Y_0:=T^7/(\Gamma
  \times B)$.  Then the singular set of $Y_0$ consists of $4$ copies
  of $T^3/\Z_2$, denoted by $S_1,\ldots,S_4$, each of which has a
  neighbourhood modelled on $(T^3\times\C^2/\Z_2)/\Z_2$ where $\Z_2$
  acts on $\C^2/\Z_2$ by $\pm(z_1,z_2)\mapsto\pm(z_1,-z_2)$.  The
  orbifold fundamental group $\pi_1(Y_0)$ is given by
  \begin{equation*}
    \pi_1(Y_0)=\<\alpha,\beta,\gamma,\sigma_1,\sigma_2,\sigma_3,\tau_1,\ldots,\tau_7\>\subset\Aff(7).
  \end{equation*}
  Up to conjugation the fundamental groups of the neighbourhoods $T_j$
  of $S_j$ are given by
   \begin{align*}
    \pi_1(T_1)&=\<\alpha,\tau_4^{-1}\tau_5^{-1}\beta\sigma_1\sigma_2\sigma_3,\tau_1,\tau_2,\tau_3\>, &
    \pi_1(T_2)&=\<\beta,\sigma_3\alpha,\tau_1,\tau_4,\tau_5\>, \\
    \pi_1(T_3)&=\<\gamma,\alpha\beta,\sigma_2,\tau_4,\tau_6\>, &
    \pi_1(T_4)&=\<\tau_3\gamma,\tau_3\alpha\beta,\sigma_2,\tau_4,\tau_6\>.
  \end{align*}
   Define
  $\rho\co \pi_1(Y_0)\to V\subset\SO(3)$ by
  \begin{alignat*}{3}
    \alpha,\beta,\sigma_3,\tau_1,\ldots,\tau_7 &\mapsto 1, &\qquad
    \gamma &\mapsto b \\
    \sigma_1 &\mapsto a, &\qquad
    \sigma_2 &\mapsto b. 
  \end{alignat*}
  The induced flat connection $\theta$ is clearly acyclic.  As
  before, for $j=3,4$, we require $S_j$ to be desingularised using a
  resolution of $\C^2/\Z_2$ in order to be able to find a lift $\tilde
  \rho_j$.  Also note that, for $j=3,4$, now we have make to a
  non-trivial choice for $m_j$, but this causes no problem since $b\in
  V$ lies in $\sG(E_j)$ and preserves $A_j$.

  Again, the resulting $\rG_2$--instanton can be lifted to appropriate
  $\sigma_1$--invariant desingularisations of $T^7/(\Gamma\times
  A)$; however we could not have constructed the lifted
  $\rG_2$--instanton directly, since the lift of $\theta$ to
  $T^7/(\Gamma \times A)$ it is not acyclic.
\end{example}

This list of examples is not exhaustive.  The reader will have no
difficulty finding more examples by modifying the ones given
above.

\hypertarget{appendixa}{}
\section*{Appendix: 
An infinite-dimensional Liouville-type theorem}
\setobjecttype{App}
\def\thesection{A}
\setcounter{equation}{0}
\setcounter{subsection}{0}
\setcounter{theorem}{0}
\label{app:liouville}

The following result is an abstraction of various results that have
appeared in the literature, for example, in Pacard--Ritor\'e's work on
the Allen--Cahn equation~\cite[Corollary~7.5]{Pacard2003} and in
Brendle's unpublished work on the Yang--Mills equation in higher
dimension~\cite[Proposition~3.3]{Brendle2003}.

\begin{lemma}\label{lem:liouvilleA}
  Let $E$ be a vector bundle of bounded geometry over a Riemannian
  manifold $X$ of bounded geometry and with subexponential volume
  growth, and suppose that $D\co C^\infty(X,E)\to{}C^\infty(X,E)$ is a
  uniformly elliptic operator of second order whose coefficients and
  their first derivatives are uniformly bounded, that is non-negative,
  such that $\<Da,a\>\geq 0$ for all $a \in W^{2,2}(X,E)$, and formally
  self-adjoint.  If $a \in C^\infty(\R^n\times X,E)$ satisfies
  \begin{equation*}
    (\Delta_{\R^n}+D)a=0
  \end{equation*}
  and $\|a\|_{L^\infty}$ is finite, then $a$ is constant in the
  $\R^n$--direction, that is $a(x,y)=a(y)$.  Here, by slight abuse of
  notation, we denote the pullback of $E$ to $\R^n\times X$ by $E$ as
  well.
\end{lemma}

Here is a heuristic argument.  Denote by $\hat a$ the partial Fourier
transform of $a$ in the $\R^n$--direction.  Then $\hat a$ solves
$(D+|k|^2)\hat a=0$.  But $D+|k|^2$ is invertible for $k\neq 0$.
Thus $\hat a$ is supported on $\{0\}\times X$ and hence must be a
linear combination of derivatives of various orders of
$\Gamma(E)$--valued $\delta$--functions.  Reversing the Fourier
transform shows that $a$ must be a polynomial in $\R^n$.  But then it
follows from the assumptions that $a$ is constant in the
$\R^n$--direction.  The actual proof will be slightly more pedestrian.

First we need to set-up some notation.  We fix a point $p\in X$ and
denote by $\rho\co X\to[0,\infty)$ a smoothing of the distance from
$p$, as in Kordyukov~\cite[Proposition~4.1]{Kordyukov1991}.  For $\delta\in\R$
we introduce a weight function $w_\delta:=e^{-\delta\rho}$ and
weighted Hilbert spaces $W^{s,2}_\delta(X,E)$ consisting of locally
integrable sections $f$ such that $w_\delta\cdot f$ lies in
$W^{s,2}(X,E)$ with inner product defined by
$\<\,\cdot\,,\,\cdot\,\>_{\smash{W^{s,2}_\delta}}
:=\<w_\delta\cdot,w_\delta\cdot\>_{\smash{W^{s,2}}}$.
As usual we set $L^2_\delta(X,E):=W^{0,2}_\delta(X,E)$.

\begin{prop}\label{prop:dk}
  For each $k_0>0$ there is a constant $\epsilon=\epsilon(k_0)>0$ such
  that for all $\delta\in(-\epsilon,\epsilon)$ and 
  $k\in[k_0,\infty)$ the operator $D+k^2\co W^{2,2}_\delta(X,E)\to
  L^2_\delta(X,E)$ is an isomorphism.  Moreover, for $\ell\geq 0$
  there is a constant $c_\ell=c_\ell(k_0)>0$ such that
  \begin{equation}\label{eq:dkest}
    \big\|\del_k^\ell\big(D+k^2\big)^{-1}a\big\|_{W^{2,2}_\delta}\leq c_\ell(1+k)^\ell \|a\|_{L^2_\delta}
  \end{equation}
  for all $k\in[k_0,\infty)$ and $a\in L^2_\delta(X,E)$.
\end{prop}

\begin{proof}
  By standard elliptic theory we have
  \begin{equation*}
    \|a\|_{W^{2,2}}\leq c \big(\|Da\|_{L^2} + \|a\|_{L^2}\big).
  \end{equation*}
  Since $D$ is non-negative, we have
  \begin{equation*}
    \|Da\|_{L^2}\leq\big\|\big(D+k^2\big)a\big\|_{L^2} \qandq
     k^2\|a\|_{L^2} \leq \big\|\big(D+k^2\big)a\big\|_{L^2}.
  \end{equation*}
  Putting everything together yields
  \begin{equation*}
    \|a\|_{W^{2,2}}\leq c(1+1/k_0^2) \big\|\big(D+k^2\big)a\big\|_{L^2}
  \end{equation*}
  for $k\in[k_0,\infty)$.  This implies that $D+k^2\co W^{2,2}\to L^2$
  is an injective operator with closed range.  It is also surjective,
  since its co-kernel can be identified with the $L^2$ kernel of
  $D+k^2$ which is trivial. 

  We now argue as in~\cite[Proposition~4.4]{Kordyukov1991}.  Via the
  Hilbert space isomorphism $W^{s,2}_{\smash{\delta}} \iso W^{s,2}$ defined by
  multiplication with $w_\delta$ the operator $D+k^2\co
  W^{2,2}_{\smash{\delta}}\to \smash{L^2_\delta}$ is equivalent to $D_\delta+k^2\co
  W^{2,2}\to L^2$ where $D_\delta:=w_\delta D w_{\smash{\delta}}^{-1}$.  We can
  write $D_\delta$ as
  \begin{equation*}
    D_\delta=D+\delta P_\delta
  \end{equation*}
  with $P_\delta\co W^{2,2}\to L^2$ bounded independent of
  $\delta$.  Therefore,
  \begin{equation*}
    \big\|\big(\big(D+k^2\big)-\big(D_\delta+k^2\big)\big)\big(D+k^2\big)^{-1} a\big\|_{L^2}
    \leq |\delta| c(1+1/k_0^2) \|a\|_{L^2}.
  \end{equation*}
  If we choose $\epsilon=\epsilon(k_0)>0$ sufficiently small, then for
  $\delta\in(-\epsilon,\epsilon)$ the factor on the right-hand sight
  is less than $\frac12$; thus, the series
  \begin{equation*}
    (D_\delta+k^2)^{-1} 
    := \big(D+k^2\big)^{-1} \sum_{i\geq 0}
\big[\big(\big(D+k^2\big)-\big(D_\delta+k^2\big)\big)\big(D+k^2\big)^{-1}\big]^i
  \end{equation*}
  converges and the operator norm of $(D_\delta+k^2)^{-1}$ is
  bounded by $2c(1+1/k_0^2)$.  This establishes \eqref{eq:dkest} for
  $\ell=0$.  For $\ell>0$, we have
  \begin{equation*}
    \del_k^\ell\big(D+k^2\big)^{-1}=
    \sum_{i=0}^{\ell}\sum_{j=2}^{\ell+1} c_{i,j,\ell} \cdot k^i
\big[\big(D+k^2\big)^{-1}\big]^j
  \end{equation*}
  for universal constants $c_{i,j,\ell}$.  Thus \eqref{eq:dkest} for
  $\ell>0$ can be reduced to the case $\ell=0$.
\end{proof}

\fullref{lem:liouvilleA} can now be proved using an argument similar
to the one used by Brendle in~\cite[Proposition~3.3]{Brendle2003}.
This is essentially the proof of the ingredients from classical
distribution theory used in the heuristic proof adapted to our
infinite-dimensional setting.

\begin{proof}[Proof of \fullref{lem:liouvilleA}]
  We proceed in $3$ steps.
  \setcounter{step}{0}
  \begin{step}\label{step:l1}
    Let $\chi\in\sS(\R^n)$ be a fast decaying function whose Fourier
    transform $\hat\chi$ vanishes in $B_{\smash{k_0}}(0)$ and let $b\in
    \smash{L^2_\delta}(X,E)$ for some $\delta\in(-\epsilon,\epsilon)$ with
    $\epsilon=\epsilon(k_0)$.  Then there exists $a \in
    \sS(\R^n,W^{2,2}_{\smash{\delta}}(X,E))$ such that
    $(\Delta_{\R^n}+D) a = \chi b$.
  \end{step}

  We construct $a\in\sS(\R^n,W^{2,2}_\delta(X,E))$ using Fourier
  synthesis.  By assumption 
  $\hat\chi(k)=0$ for $|k|\leq k_0$.  For $|k|>k_0$ set
  \begin{equation*}
    \hat a_k:=\big(D+|k|^2\big)^{-1}b.
  \end{equation*}
  and define
  \begin{equation*}
    a(x,y):=\int_{\R^n} e^{i\<x,k\>} \hat a_k(y) \hat\chi(k)
    ~\rd\cL^n(k).
  \end{equation*}
  Here $\cL^n$ denotes the $n$--dimensional Lebesgue measure on $\R^n$.
  Then
  \begin{equation*}
    \big(\Delta_{\R^n}+D\big)a(x,y) =b\chi.
  \end{equation*}
  Moreover, one can verify that $x
\mapsto\|a(x,\cdot\,)\|_{\smash{W^{2,2}_\delta}}$ is
  in $\sS(\R^n)$ using a slight variation of the proof that the
  Fourier transform maps fast decaying functions to fast decaying
  functions and the estimate $\smash{\|\del_k^\ell \hat
a_k\|_{W^{2,2}_\delta}}
  \leq c_\ell (1+|k|)^\ell \smash{\|b\|_{L^2_\delta}}$.

  \begin{step}\label{step:l2}
    Let $\chi\in\sS(\R^n)$ with $\hat\chi(0)=0$.  Then there is a
    family $(\chi_\epsilon)_{\epsilon>0}$ of fast decaying functions
    such that $\hat\chi_\epsilon$ vanishes on $B_\epsilon(0)$ and
    $\lim_{\epsilon\to 0} \|\chi_\epsilon-\chi\|_{L^1}=0$.
  \end{step}

  Pick a smooth function $\rho \co \R \to [0,1]$ such that $\rho(k) =
  0$ for $|k|\leq 1$ and $\rho(k)=1$ for $|k|\geq 2$.  Set
  $\hat\chi_{\epsilon}(k):=\rho(|k|/\epsilon)\hat\chi(k)$ and denote
  its inverse Fourier transform by $\chi_\epsilon$.  Then
  $\chi_\epsilon$ clearly satisfies the first part of the conclusion.
  To see that the second part also holds, note that from
  $\hat\chi(0)=0$ it follows that
  \begin{equation*}
    \big\|\nabla^n(\hat\chi_\epsilon-\hat\chi)\big\|_{L^{2n/(2n-1)}}
      =O\big(\epsilon^{\unfrac12}\big)
  \end{equation*}
  and therefore
  \begin{align*}
    \|\chi_\epsilon-\chi\|_{L^1}
     &\leq
\big\|(1+|x|)^{-n}\big\|_{L^{2n/(2n-1)}}\cdot\big\|(1+|x|)^n(\chi_\epsilon-\chi)\big\|_{L^{2n}} \\
     &\leq c
\big(\|\hat\chi_\epsilon-\hat\chi\|_{L^{2n/(2n-1)}}+\big\|\nabla^n(\hat\chi_\epsilon-\hat\chi)\big\|_{L^{2n/(2n-1)}}\big)
      =O\big(\epsilon^{\unfrac12}\big),
  \end{align*}
  where $c>0$ is a constant depending only on $n$.  Here we used that
  the inverse Fourier transform is a bounded linear map from
  $L^{2n/\(2n-1\)}$ to $L^{2n}$ and the Fourier transform's behaviour
  with respect to derivatives.

  \begin{step}
    Suppose that $(\Delta_{\R^n}+D)a=0$.  Then for
    $\sigma\in\sS^n(\R^n)$, $\delta\in\R^n$ and $b\in C^\infty_c(X,E)$
    we have
    \begin{equation*}
      \int_{\R^n}
      \<a(x,\cdot\,),b\>_{L^\infty,L^1}(\sigma(x+\delta)-\sigma(x))~\rd\cL^n(x)=0.
    \end{equation*}
    In particular, the conclusion of the lemma holds.
  \end{step}

  Set $\chi(x):=\sigma(x+\delta)-\sigma(x)$.  Then $\hat\chi(0)=0$.
  Let $\chi_\epsilon$ be as in Step~\ref{step:l2}.  According to
  Step~\ref{step:l1}, for each $\epsilon>0$ there is some small
  $\delta>0$ and $c_\epsilon\in \sS(\R^n,W^{2,2}_{\smash{-\delta}}(X,E))$
  such that $(\Delta_{\R^n}+D)c_\epsilon = \chi_\epsilon b$.  By the
  assumptions on $a$ and since $X$ has subexponential volume growth we
  have
  \begin{align*}
    \int_{\R^n} \<a(x,\cdot\,),b\> \chi(x)~\rd\cL^n(x)
    &=\lim_{\epsilon\to 0} \int_{\R^n}
      \<a(x,\cdot\,),b\> \chi_\epsilon(x)~\rd\cL^n(x) \\
    &=\lim_{\epsilon\to 0} \int_{\R^n} \int_X
      \<a(x,y),(\Delta_{\R^n}+D)c_\epsilon\> ~\rd\cL^n(x)~\rd\vol(y) \\
    &=\lim_{\epsilon\to 0} \int_{\R^n} \int_X \<(\Delta_{\R^n}+D) a(x,y),c_\epsilon\>
      ~\rd\cL^n(x)~\rd\vol(y)\\&=0.
  \end{align*}
  Since $\sigma$, $\delta$ and $b$ are arbitrary, it follows that $a$
  is invariant in the $\R^n$--direction.  This finishes the proof.
\end{proof}

\begin{remark}
  It is clear from the proof that in \fullref{lem:liouville} one can
  replace the assumptions that $X$ has subexponential volume growth
  and that $\|a\|_{L^\infty}$ is finite by the assumption that
  $\|a(x,\cdot)\|_{L^2_\delta}$ is bounded independent of $x\in\R^n$
  for all $\delta>0$.
\end{remark}

\bibliographystyle{gtart}
\bibliography{link}

\end{document}